\documentclass[11pt]{article}

\usepackage[T1]{fontenc}
\usepackage[utf8]{inputenc}
\usepackage{amsmath,amssymb,amsthm}
\usepackage{enumitem}
\usepackage{microtype}
\usepackage{tikz}
\usepackage{graphicx}
\usetikzlibrary{arrows.meta,positioning,calc}
\usepackage[a4paper,left=26mm,right=26mm,top=25mm,bottom=27mm,headheight=14pt,headsep=9mm]{geometry}
\usepackage{titlesec}
\usepackage{fancyhdr}
\usepackage{caption}
\usepackage{etoolbox}

\titleformat{\section}{\large\bfseries}{\thesection.}{.6em}{}
\titleformat{\subsection}{\normalsize\bfseries}{\thesubsection.}{.6em}{}
\titleformat{\paragraph}[runin]{\normalsize\bfseries}{}{0pt}{}[.]
\titlespacing*{\section}{0pt}{2.2ex plus .6ex minus .3ex}{1ex plus .2ex}
\titlespacing*{\subsection}{0pt}{1.8ex plus .4ex minus .2ex}{.7ex plus .2ex}
\titlespacing*{\paragraph}{0pt}{1.4ex plus .3ex}{.7em}

\fancypagestyle{plain}{\fancyhf{}\fancyfoot[C]{\footnotesize\thepage}}

\renewenvironment{abstract}
  {\par\vspace{.6em}\begingroup\small
   \begin{list}{}{\setlength{\leftmargin}{6mm}\setlength{\rightmargin}{6mm}}
   \item[]\textbf{Abstract.}\enspace\ignorespaces}
  {\end{list}\endgroup\vspace{.3em}}
\newenvironment{keywords}
  {\par\begingroup\small\noindent\textbf{Keywords.}\enspace\ignorespaces}
  {\par\endgroup\vspace{.25em}}
\newenvironment{MSCcodes}
  {\par\begingroup\small\noindent\textbf{Mathematics Subject Classification(2020) }\enspace\ignorespaces}
  {\par\endgroup\vspace{1em}}
\AtBeginEnvironment{thebibliography}{\small}

\makeatletter
\renewcommand{\@maketitle}{%
  \newpage\null\vskip .3em
  \begin{center}%
    {\LARGE\bfseries\@title\par}%
    \vskip 1.3em
    {\normalsize\lineskip .5em
      \begin{tabular}[t]{c}\@author\end{tabular}\par}%

  \end{center}%
  \par\vskip .4em}
\makeatother
\usepackage{hyperref}
\usepackage[nameinlink,noabbrev]{cleveref}

\theoremstyle{plain}
\newtheorem{theorem}{Theorem}[section]
\newtheorem{lemma}[theorem]{Lemma}
\newtheorem{proposition}[theorem]{Proposition}
\newtheorem{corollary}[theorem]{Corollary}
\theoremstyle{definition}
\newtheorem{definition}[theorem]{Definition}
\theoremstyle{plain}

\setlist{nosep}
\allowdisplaybreaks
\newtheorem{remark}{Remark}

\tikzset{
  graphvertex/.style={circle,draw,fill=white,inner sep=1.1pt,
    minimum size=5.5mm,font=\small},
  grapharc/.style={-{Stealth[length=1.8mm,width=1.2mm]},line width=.48pt},
  forestarc/.style={-{Stealth[length=1.8mm,width=1.2mm]},line width=.65pt},
  witnessarc/.style={-{Stealth[length=2mm,width=1.35mm]},line width=1.15pt},
  levelguide/.style={densely dotted,gray},
  patternbox/.style={rounded corners,draw,align=center,inner sep=3pt,font=\small}
}

\newcommand{\adn}{\vec{\chi}_{a}}
\newcommand{\dn}{\vec{\chi}}
\newcommand{\dnk}{\vec{\chi}_{k}}
\newcommand{\adnk}{\vec{\chi}_{a,k}}
\newcommand{\TT}{\mathrm{TT}}
\newcommand{\Tstar}{T_5^{\ast}}
\newcommand{\Tn}{\mathcal{T}_n}
\newcommand{\Nplus}{N^{+}}
\newcommand{\Nminus}{N^{-}}

\hypersetup{
  hidelinks,
  pdftitle={Acyclic Dicolourings of Oriented Graphs: Paths, Random Tournaments, and Critical Orders},
  pdfauthor={Yihang Liu, Zhenyu Yang, Yuwan Zhang},
  pdfsubject={Acyclic dichromatic number of oriented graphs and tournaments},
  pdfkeywords={acyclic dichromatic number, oriented graph, path partition, Linial conjecture, tournament, directed path, random tournament, critical order}
}

\title{Acyclic Dicolourings of Oriented Graphs:\\
Paths, Random Tournaments, and Critical Orders}

\author{%
Yihang Liu\thanks{%
  School of Mathematical Sciences, Nankai University, Tianjin, China.\newline\hspace*{1.8em}
  \textit{E-mail:} \href{mailto:liuyihang@mail.nankai.edu.cn}{\nolinkurl{liuyihang@mail.nankai.edu.cn}}}%
\and
Zhenyu Yang\thanks{%
  School of Statistics and Data Science, LPMC and KLMDASR,
  Nankai University, Tianjin, China.\newline\hspace*{1.8em}
  \textit{E-mail:} \href{mailto:yangzhenyu@mail.nankai.edu.cn}{\nolinkurl{yangzhenyu@mail.nankai.edu.cn}}}%
\and
Yuwan Zhang\thanks{%
  Center for Combinatorics, Nankai University, Tianjin, China.\newline\hspace*{1.8em}
  \textit{E-mail:} \href{mailto:yuwanzhang2000@163.com}{\nolinkurl{yuwanzhang2000@163.com}}
  (corresponding author).}%
}
\begin{document}
\nocite{Champions,BangJensenGutin2009,BJPY,BergeKoptimal,PSL,Dilworth,Gallai,GallaiMilgram,GreeneKleitman1976,Grunbaum1973,Hasse,Linial,NL1982,Redei,Roy,Stanley,Vitaver}
\maketitle


\begin{abstract}
An acyclic dicolouring of an oriented graph is a vertex partition in which every colour class and every bipartite subdigraph induced by two classes is acyclic.
We first prove the Gallai--Roy-type bound $\adn(D)\leq L(D)$, where $L(D)$ is the maximum order of a directed path.
Let $r=\log_2(8/7)$.
Bang-Jensen, Picasarri-Arrieta, and Yeo previously constructed tournaments of order $n$ whose acyclic dichromatic number is at least $n-(\frac8r)\log_2 n-\log_2\log_2 n$.
We improve the leading logarithmic coefficient by a factor of two: for the uniform random tournament $\Tn$, asymptotically almost surely, $\adn(\Tn)\geq  n-\frac4r\log_2 n+\frac2r\log_2\log_2 n-O(1).$
Finally, if $m(k)$ denotes the minimum order of an oriented graph with acyclic dichromatic number at least $k$, tournament completion shows that the same minimum is obtained over tournaments.
We prove $m(3)=5$ and $m(4)=7$ and classify the tournament witnesses at these minimum orders: there is one at order five and two at order seven.
\end{abstract}

\begin{MSCcodes}
05C20, 05C15, 05C80
\end{MSCcodes}
\begin{keywords}
acyclic dicolouring, path partition, random tournament,
first-moment method, Gallai--Roy theorem, critical order
\end{keywords}

\section{Introduction}\label{sec:introduction}

\paragraph{From posets to directed graphs}
A natural starting point for the path--colouring framework is the theory of partially ordered sets.
Dilworth's theorem identifies the minimum number of chains partitioning a finite poset with its maximum antichain size~\cite{Dilworth}.
Greene and Kleitman extended this min--max relation to every positive
integer $k$: the minimum $k$-norm
$\sum_{C\in\mathcal P}\min\{|C|,k\}$ of a chain partition equals the
maximum number of elements covered by $k$ antichains~\cite{GreeneKleitman1976}.
The dual identity, interchanging chains and antichains, was proved by
Greene~\cite{Greene1976}.
Representing a poset by its transitive acyclic digraph turns chains into directed paths and antichains into stable sets.
Gallai--Milgram~\cite{GallaiMilgram} and Gallai--Hasse--Roy--Vitaver~\cite{Gallai,Hasse,Roy,Vitaver} show that the two $k=1$ inequalities survive for arbitrary digraphs.
Linial conjectured the full extensions $\pi_k(D)\leq\alpha_k(D),$ and $ \chi_k(D)\leq\lambda_k(D),$ where the left sides are minimum $k$-norms of directed-path partitions and proper colourings, while the right sides are maximum coverage by at most $k$ stable sets and at most $k$ vertex-disjoint directed paths, respectively~\cite{Linial}.
Berge's $k$-optimal path-partition conjecture strengthens the first inequality by requiring a partial $k$-colouring that meets each path $P$ in $\min\{|V(P)|,k\}$ distinct stable colour classes~\cite{BergeKoptimal}.
This poset-to-digraph programme supplies the natural background for asking which path--colouring relations persist when stable sets are replaced by acyclic sets.

\paragraph{From dicolouring to acyclic dicolouring}
Neumann-Lara made this replacement through the \emph{dichromatic number} $\dn(D)$, the minimum number of vertex classes inducing acyclic subdigraphs~\cite{NL1982}.
If a dicolouring $\mathcal S$ is assigned the $k$-norm $\sum_{S\in\mathcal S}\min\{|S|,k\}$, let $\dnk(D)$ denote the minimum such value.
De Paula Silva, Nunes da Silva, and Lee recently proved that every minimum dicolouring has a directed path meeting each colour class exactly once and, more generally, that $\dnk(D)\leq\lambda_k(D)$~\cite{PSL}.
Thus induced acyclic sets recover important parts of the classical path duality.

Ordinary dicolouring, however, controls cycles only inside individual colour classes: a directed cycle may still alternate between two colours.
To control this interaction, and motivated by Gr\"unbaum's acyclic colouring of undirected graphs~\cite{Grunbaum1973},   Bang-Jensen, Picasarri-Arrieta, and Yeo introduced the \emph{acyclic dichromatic number} $\adn(D)$ of an oriented graph~\cite{BJPY}.
Here every colour class and every bipartite subdigraph induced by two classes must be acyclic.
For comparison with the Linial-type parameters, we define $\adnk(D)$ to be the corresponding minimum $k$-norm.
The extra condition is substantial: $\adn(D)-\dn(D)$ is unbounded even among tournaments with dichromatic number two.
Initial work developed critical and extremal questions for the parameter, and subsequent work connected it with tournament heroes and local-to-global structure~\cite{BJPY,Champions}.
In tournaments the two obstructions are concrete---a directed triangle within one class and an alternating directed $4$-cycle between two classes---making the class a natural test ground for path bounds, finite critical structure, and asymptotic extremality.
The present paper addresses these three questions in the order path theory, probabilistic extremality, and critical orders.

\paragraph{A Gallai--Roy-type result}
The preceding path result gives $\dn(D)\leq L(D)$, but it does not imply the corresponding statement for $\adn$: alternating bichromatic cycles are invisible to the individual colour classes.
Our first result shows that the Gallai--Roy numerical bound nevertheless survives unchanged.

\begin{theorem}\label{thm:intro-path}
For every finite oriented graph $D$,
$$
 \adn(D)\leq L(D).
$$
\end{theorem}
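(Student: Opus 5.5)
The plan is to adapt the classical Gallai--Roy argument. Among all spanning acyclic subdigraphs of $D$, choose one, $D'$, that is maximal under inclusion. Colour each vertex $v$ by $c(v)$, the number of vertices of a longest directed path in $D'$ ending at $v$. Since a path of $D'$ is a path of $D$, the colours lie in $\{1,\dots,L(D)\}$. By construction every arc $uv\in A(D')$ satisfies $c(u)<c(v)$.

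For an arc $vu\in A(D)\setminus A(D')$, maximality of $D'$ means that adding $vu$ creates a directed cycle. Hence $D'$ contains a directed $(u,v)$-path, so $c(u)<c(v)$ as well. Two conclusions follow:
\begin{itemize}
\item Every colour class is stable in $D$, so in particular it is acyclic.
\item For colours $i<j$, every arc from class $i$ to class $j$ lies in $D'$, and every arc from class $j$ to class $i$ lies outside $D'$ and is ``shortcut'' by a $D'$-path in the opposite direction.
\end{itemize}

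It remains to exclude a directed cycle inside the bipartite subdigraph induced by two classes $i<j$. Such a cycle alternates as $a_1b_1a_2b_2\cdots a_kb_ka_1$ with $a_t$ of colour $i$ and $b_t$ of colour $j$. Each arc $a_tb_t$ is in $D'$, and each arc $b_ta_{t+1}$ is not, so $D'$ contains an $(a_{t+1},b_t)$-path. These facts alone are consistent: colours increase along all of these paths. So inclusion-maximality of $D'$ is not sufficient, and I would strengthen the choice of $D'$.

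My first attempt is an extremal choice. Among inclusion-maximal acyclic spanning subdigraphs, choose $D'$ maximizing $\sum_v c(v)$, or lexicographically maximizing the sorted colour vector. Given a bad alternating cycle $C$, I would perform an exchange: delete the arc $a_1b_1$ from $D'$ and insert $b_ka_1$. Then I would check three things:
\begin{itemize}
\item The result stays acyclic, using that any new cycle would need a $D'$-path from $a_1$ to $b_k$ avoiding $a_1b_1$, and comparing colours along it.
\item After re-extending to a maximal acyclic subdigraph, $a_1$ gains a longer in-path, through $b_k$, which has colour $j>i$.
\item No vertex loses path length, except possibly descendants that previously relied on $a_1b_1$; these are recovered through the $(a_2,b_1)$-path.
\end{itemize}
This would contradict extremality.

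If the exchange is hard to control, the fallback is induction on $|V(D)|$. Remove the set $S$ of sinks of $D'$ (the vertices of maximum colour) and apply induction to $D-S$, whose longest path has order at most $L(D)-1$ if $S$ meets every longest path. Then add $S$ as a single new colour class, stable by the argument above. A new colour class cannot lie on any two-coloured cycle with another class $T$ as long as every arc between $S$ and $T$ points the same way. That would need arranging $S$ to be, for instance, a set with all arcs from $D-S$ into $S$ within each pair of classes; this is plausible by choosing $S$ as a terminal layer of a suitable ordering.

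The main obstacle is precisely this two-class acyclicity. Unlike the stable-set argument, it is not a local property of the longest-path labelling. Either the exchange argument must be made to strictly improve a potential, or the induction must be arranged so that the last colour class sends all its arcs one way. I would first try the potential $\sum_v c(v)$ with the single-arc swap described above.
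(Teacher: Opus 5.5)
\textbf{There is a genuine gap.} Your setup is correct as far as it goes. The longest-path labelling $c$ of an inclusion-maximal acyclic spanning subdigraph $D'$ uses at most $L(D)$ colours, and every class is stable. Between classes $i<j$, the upward arcs are exactly the arcs of $D'$.

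The whole content of the theorem, however, is the two-class condition, and the proposal does not prove it. Your swap need not stay acyclic. Since $b_ka_1\notin A(D')$, maximality itself provides a $D'$-path from $a_1$ to $b_k$. Colours increase along this path from $i$ to $j$, so comparing colours gives no contradiction, and nothing forces the path to use $a_1b_1$.

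Here is a concrete instance. Let $D'$ have arcs $a_1b_1,a_2b_2,a_2m_1,m_1b_1,a_1m_2,m_2b_2$, and let $D$ be $D'$ together with $b_1a_2$ and $b_2a_1$. Then:
\begin{itemize}
\item $D$ is oriented, and $D'$ is inclusion-maximal acyclic in $D$.
\item The labelling gives the classes $\{a_1,a_2\}$, $\{m_1,m_2\}$, $\{b_1,b_2\}$.
\item Classes $1$ and $3$ carry the alternating cycle $a_1b_1a_2b_2a_1$. This confirms your suspicion that maximality is not enough.
\item Your swap (delete $a_1b_1$, insert $b_2a_1$) creates the directed cycle $a_1m_2b_2a_1$.
\end{itemize}
You give no argument that choosing $D'$ extremal for $\sum_v c(v)$ excludes this situation.

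The fallback induction rests on two unproved properties of the removed class $S$: that it meets every longest path, and that it sends all arcs to each other class in one direction. In the example, the top class $\{b_1,b_2\}$ has arcs in both directions to class $1$, and you give no construction of a suitable $S$.

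The missing idea is to replace longest-path labels by depths in a depth-first-search forest. The paper takes a DFS spanning out-forest $F$ with discovery order $\tau$. Its key fact is a separation property: if $u\in V(F_x)$, $u\to v$ and $v\notin V(F_x)$, then $\tau(v)<\tau(x)$. The proof then runs as follows.
\begin{itemize}
\item Colouring by depth uses at most $L(D)$ classes, because a root-to-vertex tree path is a directed path.
\item Within one level, every arc goes to an earlier-discovered vertex, so each class is acyclic.
\item Suppose $D[C_i,C_j]$ with $i<j$ has a directed cycle, and let $x$ be its vertex of minimum $\tau$. Separation forces $x\in C_i$, and it forces the successor $y$ of $x$ to be a descendant of $x$.
\item The next vertex $z$ differs from $x$ because $D$ is oriented. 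It has the same depth as $x$, so it lies outside $F_x$.
\item Separation applied to $y\to z$ then gives $\tau(z)<\tau(x)$, a contradiction.
\end{itemize}
The single global discovery order controls where the ``downward'' arcs land relative to the upward ones. A longest-path labelling of an arbitrary maximal acyclic subdigraph has no such control.
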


The proof uses the level sets of a forward depth-first-search forest.
A separation property forces every $D[C_i]$ and every bipartite subdigraph $D[C_i,C_j]$, with $i\neq j$, to be acyclic, while a deepest root-to-vertex tree path certifies that the number of levels is at most $L(D)$.
The oriented hypothesis is essential because a digon defeats both parts of the colouring definition.

\paragraph{Random tournaments}
The path bound can be close to equality.
Let $\Tn$ be the uniform random tournament on $n$ labelled vertices, and put $ r=\log_2(8/7), a=\frac4r, \beta=\frac2r. $

Bang-Jensen, Picasarri-Arrieta, and Yeo proved that, for every sufficiently large $n$, there exists a tournament $T$ of order $n$ such that
$$
 \adn(T)\geq
 n-\frac8r\log_2 n-\log_2\log_2 n.
$$
Their probabilistic proof uses Stanley's enumeration of acyclic orientations, including the exact values for $K_{2,2}$, $K_{2,3}$, and $K_{3,3}$, and analyses partitions built from blocks of orders two and three~\cite{BJPY}.

We retain these local enumerative inputs but replace the restricted block analysis by a complete profile count.
An acyclic dicolouring with $n-s$ colours saves $s$ colours relative to the all-singleton colouring.
We count every possible profile of its nonsingleton colour classes and prove that, throughout the logarithmic regime, profiles consisting entirely of pairs asymptotically dominate the resulting unweighted first moment.
This changes the leading coefficient from $\frac8{\log_2(8/7)}\text{to} \frac4{\log_2(8/7)}$ and also determines the second-order term produced by this counting scheme.
\begin{theorem}\label{thm:intro-random}
Asymptotically almost surely,
$$
 \adn(\Tn)\geq n-a\log_2n+\beta\log_2\log_2n-O(1).
$$
\end{theorem}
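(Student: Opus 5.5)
We argue by the first-moment method. Fix $s\geq 1$ and let $X_s$ be the number of acyclic dicolourings of $\Tn$ with exactly $n-s$ colours. Each such colouring is a partition whose nonsingleton classes $A_1,\dots,A_k$ have sizes $a_i\geq2$ with $\sum_i(a_i-1)=s$; call $(a_1,\dots,a_k)$ its \emph{profile}. Since $\adn(\Tn)\leq n-s$ forces $X_{s'}>0$ for some $s'\geq s$, and a merge-free restriction argument is not needed, it suffices to show
$$
\sum_{s'\geq s_0}\mathbb E X_{s'}\to0,\qquad s_0=a\log_2n-\beta\log_2\log_2n+C ,
$$
for a suitable constant $C$, and then apply Markov's inequality.

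\textbf{Step 1 (conditions are independent).} For a fixed partition, the events ``$D[A_i]$ is acyclic'' and ``$D[A_i,A_j]$ is acyclic'' concern pairwise disjoint sets of arcs. Hence they are independent, and the probability of the partition being an acyclic dicolouring factorises. Pairs that involve a singleton class impose nothing, since every orientation of $K_{1,b}$ is acyclic. For two classes of size $2$ the factor is exactly $14/16=7/8$, so $\Pr=2^{-r}$.

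\textbf{Step 2 (the all-pairs profile).} With $k=s$ pairs, the number of partitions is at most $n^{2s}/(2^s s!)$, and the probability is $(7/8)^{\binom s2}$. This gives
$$
\log_2\mathbb E[\text{pair part}]\leq 2s\log_2n-\tfrac r2 s^2+\tfrac r2 s-s\log_2 s+O(s).
$$
Dividing by $s$, the expression is negative and decreases linearly in $s$ once $\tfrac r2 s>2\log_2n-\log_2 s+O(1)$. That is exactly the range $s\geq \frac4r\log_2n-\frac2r\log_2\log_2n+C$, which explains the constants $a$ and $\beta$. Summing over $s'\geq s_0$ gives a geometric series tending to $0$.

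\textbf{Step 3 (general profiles are dominated).} This is the main obstacle. A profile with $k<s$ classes has only about $n^{s+k}$ vertex choices instead of $n^{2s}$, which gains a factor $n^{-(s-k)}$. On the other hand, the penalty per pair of large classes must be controlled. I would write $t=s-k=\sum_i(a_i-2)$ and bound the cycle-avoidance probability of $K_{a,b}$ from above. To do so I would pack $\lfloor a/2\rfloor\lfloor b/2\rfloor$ arc-disjoint copies of $K_{2,2}$, each independently acyclic with probability $7/8$. Where an odd leftover vertex occurs, I would instead use Stanley's exact counts for $K_{2,3}$ and $K_{3,3}$; in addition, $D[A_i]$ is acyclic with probability $a_i!/2^{\binom{a_i}2}$. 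The goal is an estimate of the form
$$
\mathbb E[\text{profile with defect }t]\leq \mathbb E[\text{all pairs, saving }s]\cdot\bigl(c\,s^{O(1)}/n\bigr)^{t}
$$
in the regime $s=\Theta(\log n)$. The reasoning is that losing $n$ per unit of defect beats any $2^{O(s)}$-type loss in the $7/8$-penalty, because a class of size $a$ still carries penalty weight at least of order $(a-1)/2$ against every pair class.

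If the naive packing bound is too weak for classes of size $3$, I would treat size-$3$ classes separately using the exact $K_{2,3}$ and $K_{3,3}$ values. I would also crudely bound classes of size $\geq4$, where $2^{-\binom a2}$ within the class already kills them. Summing over profiles, whose number is at most $p(s)\le 2^{O(\sqrt s)}$, and over $t\geq1$ then yields a total contribution $o(1)$ times the all-pairs term. Combined with Step 2, this gives $\sum_{s'\geq s_0}\mathbb EX_{s'}=o(1)$. Therefore, a.a.s.\ $\adn(\Tn)\geq n-s_0$, which is the bound claimed in \cref{thm:intro-random}.
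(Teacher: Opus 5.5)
Your Step~3 has a genuine gap, and it is where the whole difficulty lies. Your target estimate, a factor $(c\,s^{O(1)}/n)^t$, is the right one, but the tools you propose cannot reach it.

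A block of size $p$ replaces $p-1$ pairs. Against each of the remaining $\Theta(s)$ blocks it must carry the cross-penalty those pairs carried, namely $r(p-1)$ per other pair. A shortfall of $\varepsilon$ per other block costs $2^{\varepsilon s}$. At $s\approx\frac4r\log_2 n$ this is $n^{4\varepsilon/r}$, a polynomial loss that a gain of $n^{-1}$ per unit of defect does not automatically beat.

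Your packing bound gives only $\delta_{p,q}\ge r\lfloor p/2\rfloor\lfloor q/2\rfloor$, far below what is needed:
\begin{itemize}
\item For one block of size $4$ plus $s-3$ pairs, it yields $\delta_{4,2}\ge 2r$ instead of $3r$. The comparison then loses about $n^{4}$ while the two saved vertices gain only about $n^{-2}$.
\item For a triple it yields $\delta_{3,2}\ge r$ instead of $2r$, losing $n^4$ against a gain of $n^{-1}$.
\item In general, for the vertex gain to dominate you need $\delta_{p,2}>r(3p-2)/4$. Packing gives $r\lfloor p/2\rfloor$, so it fails for every $p\ge3$.
\end{itemize}

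Your fallback for classes of size $a\ge4$ is also false. The within-class factor $a!/2^{\binom a2}$ is independent of $n$ for fixed $a$, while the cross-penalty deficit is $n^{\Theta(a)}$. It compensates only when $a$ is of order $\log n$. The exact $K_{2,3}$ and $K_{3,3}$ counts repair interactions among pairs and triples, but leave every block of size at least four without an argument.

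The missing ingredient is the sharp inequality $\delta_{p,q}\ge r(p-1)(q-1)$ for all $p,q\ge2$ (Lemma~\ref{lem:bipartite-defect}). The paper proves it as follows:
\begin{itemize}
\item the topological-order count $A_{p,q}\le p!\,(p+1)^q$ (the order of the $p$-side, plus the gap of each $q$-vertex);
\item a monotonicity check in $p$ and $q$;
\item the exact values at $(2,2)$, $(2,3)$ and $(3,3)$.
\end{itemize}
With $d_i=b_i-1$, this gives
$K(\mathbf x)\ge\sum_i\binom{d_i+1}{2}+r\sum_{i<j}d_id_j=s+r\binom s2+(1-r)\sum_i\binom{d_i}2\ge s+r\binom s2$.
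So no profile has a better exponential factor than the all-pairs profile. The comparison then reduces to the purely combinatorial factor $\bigl(s^2/(n-2s+1)\bigr)^z$.

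There is also a secondary point. Because you sum over all $s'\ge s_0$, you need estimates for $s'\gg\log n$ as well, and your Step~3 only covers $s=\Theta(\log n)$. The paper avoids this by splitting vertices off blocks one at a time (Lemma~\ref{lem:refinement}), reducing to a colouring that saves exactly $s_0$ colours. Your Step~2 threshold computation is fine, with $C$ a sufficiently large constant.
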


The two coefficients are the boundary coefficients for the unweighted first moment that counts every admissible block system once.
This statement concerns this particular union-bound scheme; it does not preclude weighted first-moment arguments, structural pruning, or methods using dependencies between block systems.

\paragraph{Critical orders}
The random result concerns large examples; at the opposite scale, one may ask for the first obstruction to a given number of colours.
For $k\geq1$, define
$$
 m(k)=\min\{|V(D)|:D\text{ is a finite oriented graph and }\adn(D)\geq k\}.
$$
Every oriented graph can be completed to a tournament without decreasing $\adn$, so the numerical value of $m(k)$ is unchanged when the minimum is restricted to tournaments.
Classification statements, however, are made only for tournament witnesses.

\begin{theorem}\label{thm:intro-critical}
One has $m(3)=5$ and $m(4)=7$.
Up to isomorphism, the unique tournament witness of order five is $\Tstar$, and the tournament witnesses of order seven are precisely $P_7$ and $Q_7$.
These tournaments have acyclic dichromatic numbers three, four, and four, respectively.
\end{theorem}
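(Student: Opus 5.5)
We prove Theorem~\ref{thm:intro-critical} by first establishing general upper bounds on $\adn$ for small orders, which give the lower bounds on $m(k)$. We then verify the three named witnesses and finally classify the extremal tournaments. Throughout we work with tournaments only, which is justified by the completion remark preceding the theorem. In a tournament the only obstructions are a directed triangle inside one class and an alternating directed $4$-cycle between two classes. A class of size at most two is always acyclic. A union of two classes of sizes $a,b$ can contain a bichromatic cycle only if $a,b\ge 2$. The cases $a=1$ or $b=1$ are automatic, since a bipartite digraph with one vertex on one side has no cycle.

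\textbf{Lower bounds on $m$.} First we show $\adn(T)\le 2$ for every tournament of order at most four. Tournaments of order four contain a transitive triple or can be split into two pairs. For a split into two pairs $\{x,y\},\{u,v\}$, the bipartite part is a $4$-cycle exactly when the orientation alternates. Checking the three pairings, one of them is always non-alternating: the number of pairings inducing an alternating $C_4$ is at most one, because an alternating $4$-cycle uses all four cross arcs of that pairing, which fixes the pattern. This gives $m(3)\ge 5$.

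Next we show $\adn(T)\le 3$ for every tournament of order at most six. For $n=6$ we seek a partition into three pairs with all three bipartite pairs non-alternating. Alternatively we use one acyclic triple, one pair and one singleton, where only the triple--pair interaction needs checking. I would argue by averaging: count the partitions into three pairs, bound the number of (pairing, alternating $C_4$) incidences by counting $4$-cycles, and where counting fails, fall back on a transitive triple plus a pair. A short case analysis on score sequences of $6$-tournaments may be needed. This gives $m(4)\ge 7$.

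\textbf{Witnesses.} Next we exhibit $\Tstar$ (presumably the Paley-like regular tournament on five vertices) and show $\adn(\Tstar)\ge 3$. A $2$-colouring of five vertices has class sizes $(3,2)$, $(4,1)$ or $(5,0)$. The last two require an acyclic class of size at least four, or force a triple with a $4$-cycle crossing. For $(3,2)$ we check that every transitive triple together with the complementary pair yields an alternating $C_4$. Symmetry of $\Tstar$ (vertex-transitivity) reduces this to a few cases. The upper bound $\adn(\Tstar)\le 3$ follows by splitting into pairs and a singleton as above. The same method applies to $P_7$ and $Q_7$. Any $3$-colouring of seven vertices has a class profile among $(3,2,2)$, $(3,3,1)$, $(4,2,1)$, etc. Using the automorphism groups (e.g.\ $P_7$ the Paley tournament, which is arc-transitive), we show that each profile forces a triangle or an alternating $C_4$. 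Four colours suffice via pairs plus a singleton.

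\textbf{Classification.} This is the main obstacle. For order five, we go through the twelve nonisomorphic $5$-tournaments and show that every one except $\Tstar$ admits an acyclic $2$-dicolouring. Typically one takes a transitive triple containing a source or sink, whose presence kills any alternating $C_4$. For order seven there are $456$ tournaments, so a hand classification needs structure. I would first prove that a witness $T$ has every $6$-vertex subtournament with $\adn=3$ and every vertex deletion "tight". Using the order-six analysis, I would then show that $T$ must be nearly regular, with scores in $\{2,3,4\}$. Moreover, every vertex's out- and in-neighbourhood must themselves be obstructions to certain colourings, e.g.\ containing no transitive triple compatible with the rest. This pins down the neighbourhoods, leaving $P_7$ and one other candidate $Q_7$. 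If the structural reduction stalls, I would accept a computer-assisted exhaustive check over all $456$ tournaments as the fallback, reporting it as such.
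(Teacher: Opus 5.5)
Your lower-bound part is sound, and the averaging idea for order six closes without any fallback. A strong $4$-subtournament has exactly one Hamiltonian cycle, so it spoils exactly one of the $15$ partitions of the six vertices into pairs (the one containing its two opposite pairs). Some vertex has outdegree at least $3$, and that vertex together with three out-neighbours is a non-strong $4$-set. Hence at most $14$ of the $15$ pair-partitions are spoiled, and the remaining one is an acyclic $3$-dicolouring. This is slicker than the paper's route, which uses a $\TT_4$ plus singletons, and otherwise the score sequence $(2,2,2,3,3,3)$ and an explicit matching.

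The first real problem is the order-five witness: $\Tstar$ is not the regular tournament. In the cyclic model $i\to i+1,i+2\pmod 5$, the classes $\{0,1\}$ and $\{2,3,4\}$ are transitive, and the bipartite part has topological order $4,1,3,0,2$. So the regular tournament has $\adn=2$, and your planned check that every transitive triple meets its complementary pair in an alternating $4$-cycle would fail. The vertex-transitivity reduction is also unavailable. The actual witness has score sequence $(1,2,2,2,3)$: $\{h\}\Rightarrow M\Rightarrow\{\ell\}$ with $M$ a directed triangle and $\ell\to h$. The paper finds it by noting that without a $\TT_4$ no vertex has outdegree $0$ or $4$, which leaves the score sequences $(2,2,2,2,2)$, $(1,2,2,2,3)$ and $(1,1,2,3,3)$. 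It then proves $\adn(\Tstar)\geq3$ by forcing. The vertices $h$ and $\ell$ must get different colours, since otherwise $\ell\to h\to m\to\ell$ is monochromatic for some $m\in M$ of their colour. Each colour choice for $h$ then yields an alternating $4$-cycle.

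The order-seven classification, which you rightly flag as the main obstacle, has no working mechanism in your plan. The lower bounds $\adn(P_7)\geq4$ and $\adn(Q_7)\geq4$ are also not argued. For these you must exclude class types $(3,3,1)$ and $(3,2,2)$, and for $Q_7$ also $(4,2,1)$, after showing it has no $\TT_5$.

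The missing idea is to split on the largest transitive subtournament order $t(T)$:
\begin{itemize}
\item If $t(T)\geq5$, a $\TT_5$ plus two singletons is an acyclic $3$-dicolouring.
\item If $t(T)=3$, no $\Nplus(v)$ or $\Nminus(v)$ contains a transitive triple, so both are directed triangles. Then $T$ is regular, the arcs from $\Nminus(v)$ to $\Nplus(v)$ form a perfect matching, and the two triangles have opposite orientations; this forces $P_7$.
\item If $t(T)=4$, you are in the case where $Q_7$ lives. It is not regular (scores $(2,2,2,3,4,4,4)$), so your neighbourhood-pinning heuristic cannot produce it.
\end{itemize}
In the last case the key tool is the comparison lemma: $T[A,B]$ is acyclic iff the sets $\Nplus(a)\cap B$, $a\in A$, form an inclusion chain. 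Fix a transitive four-set $X$. The three outside patterns $S_X(y)$ must avoid the five insertion patterns. They must also be pairwise incomparable, since otherwise $X\mid\{y,z\}\mid\{w\}$ is an acyclic $3$-dicolouring. This reduces the case to $26$ antichains in $15$ symmetry orbits, which can be checked by hand.

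Your claim that all scores lie in $\{2,3,4\}$ is unproved, and it would not suffice even if proved. An exhaustive search over the $456$ tournaments is a different, computational proof, not a completion of the argument you sketch.
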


The order-five case follows from score sequences.
At order seven, the size of a largest transitive subtournament splits the proof into a regular case, which forces $P_7$, and a transitive-four-set case, which produces $Q_7$ through an inclusion-antichain analysis.
The main text develops the comparison idea, and Appendix~A contains the complete case analysis.

\section{Terminology and elementary facts}\label{sec:terminology}

All digraphs considered in this paper are finite and nonempty and have neither loops nor parallel arcs.
For terminology not defined here, we follow \cite{BangJensenGutin2009}.
An \emph{oriented graph} is a digraph with no pair of opposite arcs, and a \emph{tournament} is an oriented graph in which exactly one of $uv$ and $vu$ is an arc for every pair of distinct vertices $u,v$.

We write $u\to v$ when $uv\in A(D)$.
The out- and in-neighbourhoods of $v$ are denoted by $\Nplus_D(v)$ and $\Nminus_D(v)$, respectively, and the corresponding degrees by $d_D^+(v)$ and $d_D^-(v)$.
Subscripts are omitted when the digraph is clear.
For disjoint sets $X,Y\subseteq V(D)$, the bipartite subdigraph $D[X,Y]$ contains all arcs with one end in each set.
We write $X\Rightarrow Y$ when every vertex of $X$ dominates every vertex of $Y$.

A directed path is a sequence $(v_1,\ldots,v_p)$ of distinct vertices with $v_i\to v_{i+1}$ for $1\leq i<p$; its order is $p$.
Let $L(D)$ be the maximum order of a directed path in $D$.
A digraph is \emph{acyclic} if it has no directed cycle.
A topological ordering lists the vertices of an acyclic digraph so that every arc points forward.
A displayed chain such as $x\to y\to z$ asserts only the two consecutive arcs unless an additional arc is stated.

\begin{definition}\label{def:colouring}
For a positive integer $q$, write $[q]=\{1,\ldots,q\}$.%
A colouring with nonempty classes $C_1,\ldots,C_q$ is a \emph{$q$-dicolouring} if every $D[C_i]$ is acyclic; its minimum possible number of classes is $\dn(D)$.
If $D$ is oriented, the colouring is an \emph{acyclic $q$-dicolouring} if it is a dicolouring and $D[C_i,C_j]$ is acyclic whenever $i\neq j$.
Its minimum possible number of classes is $\adn(D)$.
\end{definition}

The assumption that \(D\) is oriented is essential.
If both \(u\to v\) and \(v\to u\) are present, then \(u\) and \(v\) cannot belong either to the same colour class or to two distinct colour classes: in the former case they form a directed \(2\)-cycle within one class, and in the latter they form a directed \(2\)-cycle between two classes.
On the other hand, every oriented graph admits an acyclic dicolouring by assigning a distinct colour to each vertex.
We refer to this as the \emph{all-singleton colouring}.

A tournament is acyclic if and only if it is transitive.
We denote the transitive tournament of order $t$ by $\TT_t$.
Its \emph{transitive order} lists all vertices so that every arc points from left to right.

For a positive integer $k$ and a dicolouring $\mathcal S$, define its $k$-norm by $\|\mathcal S\|_k=\sum_{S\in\mathcal S}\min\{|S|,k\}.$
Let $\dnk(D)$ be the minimum $k$-norm over all dicolourings, and let $\adnk(D)$ be the corresponding minimum over acyclic dicolourings of an oriented graph.
In particular, $\dn_1(D)=\dn(D)$ and $\vec{\chi}_{a,1}(D)=\adn(D)$.

\begin{lemma}\label{lem:local}
Let $T$ be a tournament.
\begin{enumerate}[label=\textnormal{(\alph*)}]
\item A subtournament of $T$ is acyclic if and only if it contains no directed triangle.
\item If $X$ and $Y$ are disjoint vertex sets, then $T[X,Y]$ is acyclic if and only if it contains no directed $4$-cycle.
Every such cycle alternates between $X$ and $Y$.
\end{enumerate}
\end{lemma}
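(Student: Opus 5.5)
For part (a), I will use the standard shortest-cycle reduction. One direction is trivial, since a directed triangle is a directed cycle. For the converse, suppose a subtournament contains a directed cycle and pick a shortest one, $(v_1,\dots,v_\ell)$, with $\ell\geq3$. If $\ell\geq4$, then $v_1$ and $v_3$ are distinct and non-consecutive on the cycle, and the tournament contains exactly one of the arcs $v_1v_3$ and $v_3v_1$. If $v_3\to v_1$, then $(v_1,v_2,v_3)$ is a directed triangle. If $v_1\to v_3$, then $(v_1,v_3,\dots,v_\ell)$ is a shorter directed cycle. Either way we contradict minimality unless $\ell=3$.

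For part (b), the same strategy works once I use the bipartite structure. First, $T[X,Y]$ contains only arcs between $X$ and $Y$, so every directed cycle in it alternates between the two sets and has even length at least $4$; in particular, any directed $4$-cycle alternates, which is the final sentence of (b). Again one direction is trivial. For the converse, take a shortest directed cycle $(x_1,y_1,x_2,y_2,\dots,x_m,y_m)$ with $x_i\in X$ and $y_i\in Y$, and suppose $m\geq3$. Since $T$ is a tournament, $x_1$ and $y_2$ are joined by an arc, and that arc lies in $T[X,Y]$. If $y_2\to x_1$, then $(x_1,y_1,x_2,y_2)$ is a directed $4$-cycle. If $x_1\to y_2$, then $(x_1,y_2,x_3,\dots,y_m)$ is a directed cycle that is shorter by two vertices. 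Either alternative contradicts minimality, so $m=2$.

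I do not expect any real obstacle. The only point needing care is the parity in (b): the shortcut chord must join vertices on opposite sides, so that it is actually an arc of $T[X,Y]$. Choosing the pair $x_1,y_2$ guarantees this, and it also guarantees that the shortcut cycle is still alternating with length $2m-2\geq4$ when $m\geq3$. In (a), distinctness of $v_1$ and $v_3$ is automatic because $\ell\geq4$.
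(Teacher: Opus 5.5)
Your proof is correct and takes the same approach as the paper. Both choose a shortest directed cycle and shorten it by a chord: $v_1v_3$ in part (a), and in part (b) the arc joining the first and fourth vertices ($x_1$ and $y_2$), which yields either a directed $4$-cycle or a strictly shorter alternating cycle. Your version simply writes out the case analysis that the paper leaves implicit.
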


\begin{proof}
Choose a shortest directed cycle.
In a tournament, any cycle of length at least four has a chord whose orientation produces a shorter directed cycle, proving (a).
In a bipartite orientation, every directed cycle alternates between the two parts.
A shortest one has length at least four; if its length is at least six, the arc joining its first and fourth vertices produces either a directed $4$-cycle or a shorter directed cycle.
This proves (b).
\end{proof}

\begin{lemma}\label{lem:refinement}
Suppose that $\mathcal C$ is an acyclic dicolouring of an oriented graph $D$.
If one or more colour classes of $\mathcal C$ are partitioned into smaller nonempty classes, then the resulting colouring is again an acyclic dicolouring.
\end{lemma}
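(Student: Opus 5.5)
The plan is to verify the two conditions in Definition~\ref{def:colouring} directly for the refined colouring $\mathcal C'$, using monotonicity of acyclicity under taking subdigraphs. The only preliminary observation needed is this: every subdigraph of an acyclic digraph is acyclic, because a directed cycle in the subdigraph would also be a directed cycle in the larger digraph.

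First, each new class $C'$ of $\mathcal C'$ lies inside some original class $C\in\mathcal C$. Therefore $D[C']$ is an induced subdigraph of the acyclic digraph $D[C]$, and so it is acyclic. Nonemptiness of the classes holds by hypothesis, so $\mathcal C'$ is a dicolouring.

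Second, take two distinct new classes $C_1'\subseteq C_a$ and $C_2'\subseteq C_b$, where $C_a,C_b\in\mathcal C$. There are two cases.
\begin{enumerate}[label=\textnormal{(\roman*)}]
\item If $a\neq b$, then every arc with one end in $C_1'$ and the other in $C_2'$ is an arc of $D[C_a,C_b]$. Hence $D[C_1',C_2']$ is a subdigraph of the acyclic digraph $D[C_a,C_b]$, and it is acyclic.
\item If $a=b$, so that both new classes come from splitting the same original class, then $D[C_1',C_2']$ is a subdigraph of $D[C_1'\cup C_2']$, which is in turn a subdigraph of $D[C_a]$. Since $D[C_a]$ is acyclic, so is $D[C_1',C_2']$.
\end{enumerate}

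The only point needing care is case (ii). Two pieces of the same old class were never subject to the bipartite condition, but their bipartite subdigraph is controlled by the acyclicity of the old class itself. The orientedness of $D$ is used only implicitly, to ensure that the notion of an acyclic dicolouring is well defined. With both conditions checked, $\mathcal C'$ is an acyclic dicolouring.
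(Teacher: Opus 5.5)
Your proof is correct and matches the paper's argument: each new class, and each bipartite interaction between two pieces of the same old class, is a subdigraph of that acyclic old class. An interaction between pieces of two distinct old classes $C_a,C_b$ is a subdigraph of the acyclic digraph $D[C_a,C_b]$.
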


\begin{proof}
Let $A$ and $B$ be two classes of the new colouring.
Each new class is contained in some old colour class.
Thus $D[A]$ is a subdigraph of an old colour class and is acyclic.

If $A$ and $B$ come from the same old class $C_i$, then $D[A,B]$ is a subdigraph of $D[C_i]$, and hence is acyclic.
If they come from two distinct old classes $C_i$ and $C_j$, then $D[A,B]$ is a subdigraph of $D[C_i,C_j]$, which is also acyclic.
\end{proof}

\section{A Gallai--Roy-type path bound}\label{sec:paths}

\subsection{The Gallai--Hasse--Roy--Vitaver principle}
Let $U(D)$ be the underlying simple graph of a digraph $D$.
A collection $\mathcal H$ of pairwise disjoint vertex sets and a subdigraph $Q$ are \emph{orthogonal} if every member of $\mathcal H$ contains exactly one vertex of $Q$.

\begin{theorem}[Gallai--Hasse--Roy--Vitaver~\cite{Gallai,Hasse,Roy,Vitaver}]\label{thm:GHRV}
Let $D$ be a digraph and let $P$ be a prescribed longest directed path of $D$.
There is a proper colouring of $U(D)$ whose colour classes are orthogonal to $P$.
In particular, $\chi(U(D))\leq |V(P)|=L(D).$
\end{theorem}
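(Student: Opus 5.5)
Fix a longest directed path $P=(p_1,\ldots,p_L)$ with $L=L(D)$. We build the colour classes directly and check orthogonality as we go.

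\textbf{Step 1: pass to a maximal acyclic spanning subdigraph.} Choose a spanning subdigraph $D'$ of $D$ that is acyclic, contains every arc of $P$, and is maximal under inclusion among such subdigraphs. It exists because $P$ itself is acyclic. Maximality gives the key property: for every arc $uv\in A(D)\setminus A(D')$, the digraph $D'$ contains a directed path from $v$ to $u$, since otherwise $uv$ could be added without creating a cycle.

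\textbf{Step 2: colour by longest-path length.} For each vertex $x$, let $c(x)$ be the order of a longest directed path of $D'$ ending at $x$. Then $1\le c(x)\le L(D')\le L(D)=L$.

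\textbf{Step 3: the colouring is proper.} Let $uv$ be an arc of $D$.
\begin{itemize}
\item If $uv\in A(D')$, then a longest path of $D'$ ending at $u$ extends by $v$, because $D'$ is acyclic and so $v$ does not lie on that path. Hence $c(v)>c(u)$.
\item If $uv\notin A(D')$, then $D'$ has a directed $v$-$u$ path. Concatenating it with a longest path ending at $v$ gives a walk, which is a path by acyclicity of $D'$. Hence $c(u)>c(v)$.
\end{itemize}
In both cases $c(u)\ne c(v)$, so the classes are independent in $U(D)$.

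\textbf{Step 4: orthogonality.} The path $P$ lies in $D'$, and each $c(p_i)\ge i$ by following the arcs of $P$. Since $c$ is bounded by $L$, we get $c(p_i)\le L$ as well.

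The main obstacle is forcing $c(p_i)=i$ exactly, so that each class contains exactly one vertex of $P$.

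\begin{itemize}
\item \textbf{Classes of size at most one on $P$.} The arcs $p_ip_{i+1}$ give $c(p_{i+1})>c(p_i)$. So $c$ is strictly increasing along $P$, and no class contains two vertices of $P$.
\item \textbf{Every class meets $P$.} Here we need $c(p_i)=i$. We also have $c(p_i)\le L-(L-i)=i$, because a longest $D'$-path ending at $p_i$ can be continued along $p_{i+1},\ldots,p_L$. That continuation is a path, since $D'$ is acyclic and these vertices are reachable from $p_i$ and hence not on the earlier part. Its order is at most $L$.
\end{itemize}
Therefore $c(p_i)=i$, every value in $[L]$ is used on $P$, and every colour class meets $P$ exactly once. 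If some colour value were unused off $P$, it would still appear on $P$, so all $L$ classes are nonempty.

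Finally, the bound $\chi(U(D))\le L(D)$ follows immediately from having a proper colouring with $L$ classes.
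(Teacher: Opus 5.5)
Your proof is correct and complete. The paper gives no proof of this statement; it quotes it from Gallai, Hasse, Roy and Vitaver, so there is nothing in the paper to match it against.

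Your argument is the classical Roy--Gallai proof: colour each vertex by the order of a longest path ending at it in a maximal acyclic spanning subdigraph $D'$. You add the one modification needed for orthogonality, namely choosing $D'\supseteq A(P)$. The two-sided estimate $i\le c(p_i)\le i$ is exactly the right way to obtain $c(p_i)=i$. The upper bound is where the hypothesis that $P$ is longest in $D$ enters, since the continuation along $p_{i+1},\dots,p_L$ is a path of $D$.

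For contrast, the only path-colouring argument the paper itself gives is Lemma~\ref{lem:level-decomposition}, which uses depth-first-search levels. That method would not prove this statement, for two reasons. First, its classes are acyclic but need not be stable, because an arc can join two vertices of the same depth. Second, the certifying path is a deepest tree path, which need not be a longest path of $D$. So it gives $\adn(D)\le L(D)$ but neither a proper colouring of $U(D)$ nor orthogonality to a prescribed longest path.

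Two cosmetic points:
\begin{enumerate}
\item The first bullet of Step~4 becomes redundant once $c(p_i)=i$ is established.
\item In the second case of Step~3, say explicitly that the $v$--$u$ path has at least one arc, since that is what makes $c(u)>c(v)$ strict.
\end{enumerate}
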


De Paula Silva, Nunes da Silva, and Lee proved the reverse statement for ordinary dicolourings~\cite{PSL}.

\begin{theorem}[De Paula Silva--Nunes da Silva--Lee~\cite{PSL}]\label{thm:reverse-orth}
Let $D$ be a digraph and let $\mathcal S=\{S_1,\ldots,S_t\}$ be a minimum dicolouring of $D$.
Then $D$ has a directed path containing exactly one vertex of each $S_i$.
\end{theorem}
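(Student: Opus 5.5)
One natural approach is by contradiction, in the spirit of the Gallai--Hasse--Roy--Vitaver argument behind Theorem~\ref{thm:GHRV}. We would fix a minimum dicolouring $\mathcal S=\{S_1,\ldots,S_t\}$ of $D$ and suppose that no directed path meets every $S_i$ exactly once. From $\mathcal S$ we aim to build a dicolouring with at most $t-1$ classes, contradicting minimality. Call a directed path \emph{rainbow} if it meets each class at most once. For $v\in V(D)$, let $\rho(v)$ be the largest number of vertices on a rainbow directed path ending at $v$. The assumption says $1\le\rho(v)\le t-1$ for every $v$. We would try to use $\rho$, possibly together with the original colour, as the new colouring.

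\textbf{Step 1: a cleaner potential.} The raw partition $\{v:\rho(v)=j\}$ need not be acyclic. A cycle may lie inside one level, and rainbow paths cannot simply be extended, because an extension may revisit a class. So we first normalise $\mathcal S$ without changing $t$. Among all minimum dicolourings we would pick one that is extremal for a secondary parameter, for instance lexicographically maximising $(|S_1|,|S_2|,\ldots)$ under a fixed ordering. Extremality should give a domination property: every $v\in S_i$ with $i>1$ has an in-neighbour in each earlier class $S_j$. Otherwise $v$ could move to $S_j$ while keeping $D[S_j\cup\{v\}]$ acyclic. The check is that a new cycle through $v$ in $S_j\cup\{v\}$ needs an in-arc into $v$ from $S_j$.

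This already proves the weaker statement that \emph{some} minimum dicolouring has a rainbow path meeting every class, by walking backwards from a vertex of $S_t$. The theorem, however, concerns an arbitrary prescribed $\mathcal S$.

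\textbf{Step 2: transfer to the prescribed colouring.} This is the step I expect to be the main obstacle. Here I would argue by induction on $t$. Let $A$ be the set of vertices that are endpoints of rainbow paths using all classes except possibly one fixed class $S_t$. The induction hypothesis, applied to $D-S_t$ with the dicolouring $\{S_1,\ldots,S_{t-1}\}$, is available only if that colouring is minimum there. If it is not, then $D-S_t$ has a dicolouring with $t-2$ classes, and adding $S_t$ back gives $t-1$ classes, a contradiction. So the induction hypothesis applies and yields a rainbow path $P$ meeting $S_1,\ldots,S_{t-1}$.

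We must then show that some such path can be extended, or spliced, to include a vertex of $S_t$. Suppose this fails for all of them. Then each $x\in S_t$ is ``blocked'': no arc leaves the set of endpoints of these paths into $x$, and no arc leaves $x$ into their starting points. From this we would build a recolouring: move the blocked part of $S_t$ into a suitable class $S_j$, using the blocking to show that any directed cycle created in $D[S_j\cup X]$ would produce a longer rainbow path. This would reduce the number of classes.

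Making this exchange rigorous is the crux. It requires choosing $P$ and the receiving class carefully, probably through an alternating-path or augmenting argument similar to proofs of the Gallai--Milgram theorem. So I would first try to reduce it to a Gallai--Milgram-type exchange lemma.
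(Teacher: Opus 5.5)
The paper gives no proof of this statement; it quotes it from De Paula Silva, Nunes da Silva and Lee. So your proposal has to stand on its own, and it has a genuine gap. Step~1 is correct, but the lexicographic choice replaces $\mathcal S$, so it only shows that \emph{some} minimum dicolouring has an orthogonal path. Everything about the \emph{prescribed} colouring is in Step~2, which you leave as a plan. Moreover, its sketched mechanism cannot work: blocking only controls extensions of rainbow $(t-1)$-paths of $D-S_t$ at their ends, and it does not let you absorb $S_t$ into the old classes with $S_1,\dots,S_{t-1}$ fixed.

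For example, let $D$ consist of a complete symmetric digraph on $p_1,p_2,p_3$, together with a vertex $x$ joined by digons to $c_1,c_2,c_3$. Let $S_i=\{p_i,c_i\}$ for $i\le 3$ and $S_4=\{x\}$. Then $\{S_1,S_2,S_3\}$ is a minimum dicolouring of $D-S_4$, no path meets all four classes, and $x$ is blocked. Yet $D[S_j\cup\{x\}]$ contains the $2$-cycle $x\to c_j\to x$ for every $j$, and this cycle yields no longer rainbow path. Every $3$-dicolouring of $D$ puts $x$ with some $p_i$ and so must separate $c_i$ from $p_i$. Hence any valid recolouring also moves vertices of the old classes, and that is exactly the exchange lemma you defer.

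The missing idea is a simultaneous shift along the classes, which needs neither induction nor an extremal choice. Fix the order $S_1,\dots,S_t$. Put $A_1=S_1$, and for $i\ge 2$ let $A_i$ be the set of vertices of $S_i$ with an in-neighbour in $A_{i-1}$. Any vertex of $A_t$ ends a path $v_1\to\cdots\to v_t$ with $v_i\in S_i$, which is what you want.

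If instead $A_t=\varnothing$, set $C_i=A_i\cup(S_{i+1}\setminus A_{i+1})$ for $1\le i\le t-1$. These sets partition $V(D)$. By the definition of $A_{i+1}$, no arc goes from $A_i$ to $S_{i+1}\setminus A_{i+1}$. So a directed cycle in $D[C_i]$ would have to lie inside $D[A_i]$ or inside $D[S_{i+1}\setminus A_{i+1}]$, and both are acyclic. This gives a dicolouring with at most $t-1$ classes, contradicting minimality.

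This is your backward walk from Step~1 made to work for the given $\mathcal S$. Instead of moving single vertices, which is what forces the extremal choice, you move all of $S_{i+1}\setminus A_{i+1}$ down one class at once. Since the labelling of the classes was arbitrary, it even yields an orthogonal path visiting the classes in any prescribed order.
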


Their argument also gives $\dnk(D)\leq\lambda_k(D)$, where $\lambda_k(D)$ is the maximum number of vertices covered by at most $k$ pairwise vertex-disjoint directed paths.
The next result addresses the stronger two-class condition in Definition~\ref{def:colouring}.

\subsection{A rooted-forest decomposition}
A rooted out-forest $F$ has every tree arc directed from parent to child.
Write $F_x$ for the subtree rooted at $x$, and let $d_F(v)$ be the number of vertices on the root-to-$v$ tree path.

\begin{lemma}\label{lem:level-decomposition}
Every finite digraph $D$ has a spanning rooted out-forest $F$ and a linear order $\tau$ of its vertices with the following separation property:
\begin{equation}
 u\in V(F_x),\quad u\to v,\quad v\notin V(F_x)
 \quad\Longrightarrow\quad \tau(v)<\tau(x).\label{eq:separation}
\end{equation}
If $D$ is oriented, the nonempty level sets $C_i=\{v\in V(D):d_F(v)=i\}$ form an acyclic dicolouring of $D$.

\end{lemma}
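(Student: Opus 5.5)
The plan is to build $F$ by depth-first search and to take $\tau$ to be the discovery (preorder) order. Order the vertices arbitrarily. Repeatedly pick the first undiscovered vertex as a new root and run a forward DFS from it: whenever the search scans an arc $u\to v$ with $v$ undiscovered, it makes $v$ a child of $u$ and recurses. Each discovered vertex receives the next value of $\tau$, and we record its finishing time. This yields a spanning rooted out-forest $F$ whose tree arcs go from parent to child. We use the standard parenthesis property: $v\in V(F_x)$ if and only if $v$ is discovered during the active interval of $x$, that is, after $x$ is discovered and before $x$ finishes.

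To verify \eqref{eq:separation}, let $u\in V(F_x)$, $u\to v$ and $v\notin V(F_x)$. When the search processes $u$ it scans the arc $u\to v$, and this happens inside the active interval of $x$. If $v$ were undiscovered at that moment, it would become a child of $u$ and hence lie in $F_x$, which is false. So $v$ is already discovered when the arc is scanned. If $v$ had been discovered after $x$, it would have been discovered inside the active interval of $x$ and so would lie in $F_x$, again a contradiction. Hence $v$ was discovered before $x$, which gives $\tau(v)<\tau(x)$. The only delicate point is stating the parenthesis property cleanly. I would include its one-line justification: vertices discovered while $x$ is on the recursion stack are exactly those reached through the recursion below $x$.

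Now assume $D$ is oriented and let $Z$ be a directed cycle in some $D[C_i]$ or in some $D[C_i,C_j]$ with $i\ne j$. Let $x$ be the vertex of $Z$ with smallest $\tau$. First, $Z\subseteq V(F_x)$. Otherwise, walking along $Z$ from $x$, some arc $u\to v$ of $Z$ has $u\in V(F_x)$ and $v\notin V(F_x)$. Then \eqref{eq:separation} gives $\tau(v)<\tau(x)$ with $v\in Z$, contradicting the choice of $x$. Consequently every vertex of $Z$ other than $x$ is a proper descendant of $x$, so its depth $d_F$ is strictly larger than $d_F(x)$.

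If $Z$ lies in a single class $C_i$, then any second vertex of $Z$ has the same depth as $x$, which is impossible. If $Z$ lies in $D[C_i,C_j]$, the cycle alternates between $C_i$ and $C_j$. Because $D$ is oriented, $Z$ has length at least $4$, so $Z$ contains a vertex $y\ne x$ in the same class as $x$. Then $d_F(y)=d_F(x)$, again a contradiction. Thus every $D[C_i]$ and every $D[C_i,C_j]$ is acyclic, and the nonempty levels form an acyclic dicolouring. This is exactly where the oriented hypothesis enters: a digon between levels $i$ and $j$ would escape the argument.
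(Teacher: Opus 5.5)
Your proposal is correct and follows the paper's proof: it uses the same forward depth-first forest with discovery order $\tau$, derives the separation property the same way, and picks the minimum-$\tau$ vertex on a putative cycle. The only difference is a small streamlining. You propagate the separation property around the cycle to place all of $Z$ inside $F_x$, which treats the monochromatic and bichromatic cases uniformly; the paper instead uses decreasing $\tau$ as a topological order within a level and a two-arc argument $x\to y\to z$ across levels.
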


\begin{proof}
Take a standard forward depth-first spanning forest and let $\tau$ be its discovery order.
If the hypotheses in \eqref{eq:separation} hold, then $v$ cannot be undiscovered while the subtree rooted at $x$ is being explored, for otherwise it would belong to that subtree.
Hence $v$ precedes $x$, proving the separation property.

If $u\to v$ with $u,v\in C_i$, neither vertex is a descendant of the other, so \eqref{eq:separation} gives $\tau(v)<\tau(u)$.
Thus decreasing $\tau$ is a topological order of $D[C_i]$.

Now let $i<j$ and suppose that $D[C_i,C_j]$ contains a directed cycle.
Choose on it a vertex $x$ of minimum $\tau$.
If $x\in C_j$, its successor lies outside $F_x$, contradicting \eqref{eq:separation}; hence $x\in C_i$.
Let $x\to y\to z$ be the first two arcs of the cycle.
The minimality of $\tau(x)$ and \eqref{eq:separation} imply that $y$ is a descendant of $x$.
Because $D$ is oriented, the cycle has length at least four, so $z\neq x$.
Since $z$ and $x$ have the same depth, $z\notin V(F_x)$, and applying \eqref{eq:separation} to $y\to z$ gives $\tau(z)<\tau(x)$, a contradiction.
Therefore every $D[C_i,C_j]$ is acyclic.
\end{proof}

The level decomposition and the path certifying its number of nonempty levels are illustrated in Figure~\ref{fig:level-forest}.

\begin{figure}[t]
\centering
\includegraphics{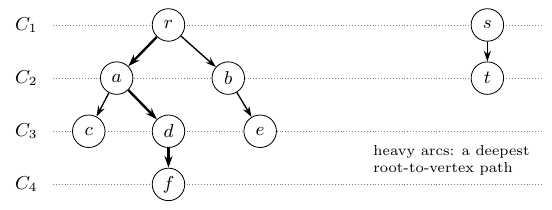}
\caption{The level decomposition in Lemma~\ref{lem:level-decomposition}.
The solid arrows are the tree arcs of the particular rooted out-forest $F$ supplied by the lemma; all other arcs of $D$ are suppressed.
The horizontal levels are the colour classes $C_i$.
The heavy path meets every nonempty level once and certifies that the number of levels is at most $L(D)$.}
\label{fig:level-forest}
\end{figure}
\begin{theorem}\label{thm:path-bound}
Every finite oriented graph $D$ satisfies
$$
 \adn(D)\leq L(D).
$$
\end{theorem}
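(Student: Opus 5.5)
The plan is to apply Lemma~\ref{lem:level-decomposition} directly. Since $D$ is oriented, that lemma provides a spanning rooted out-forest $F$ with a linear order $\tau$ satisfying the separation property~\eqref{eq:separation}. It also shows that the nonempty level sets $C_i=\{v: d_F(v)=i\}$ form an acyclic dicolouring of $D$. Hence $\adn(D)$ is at most the number of nonempty levels, and it remains only to show that this number is at most $L(D)$.

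For the counting step, let $h=\max_v d_F(v)$ be the maximum depth in $F$, and choose a vertex $w$ with $d_F(w)=h$. The root-to-$w$ path in $F$ consists of tree arcs, each directed from parent to child, and every tree arc is an arc of $D$. This path is therefore a directed path of $D$ with exactly $h$ vertices, one at each depth $1,\dots,h$. So $h\leq L(D)$. Depths are defined by counting vertices on root paths, so they take values in $\{1,\dots,h\}$, and every depth in that range is realised, for instance by the ancestors of $w$. The nonempty levels are thus exactly $C_1,\dots,C_h$, which gives
\[
\adn(D)\leq h\leq L(D).
\]

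There is no real obstacle here, because the difficult part, acyclicity of each $D[C_i]$ and each $D[C_i,C_j]$, was already settled in Lemma~\ref{lem:level-decomposition}. The only point to check is the bookkeeping: the forest is an out-forest, so the tree path from a root to a deepest vertex is genuinely a directed path of $D$ and not merely a path in $U(D)$. This certifies the heavy path shown in Figure~\ref{fig:level-forest}.
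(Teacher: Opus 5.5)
Your proposal is correct and follows the paper's proof essentially verbatim. You take the acyclic level colouring from Lemma~\ref{lem:level-decomposition} and use a deepest root-to-vertex tree path as a directed path of order $h$ to get $\adn(D)\leq h\leq L(D)$; your remark that every depth $1,\dots,h$ is realised, so there are exactly $h$ classes, is a harmless refinement of the paper's bound.
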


\begin{proof}
Let $F$ be supplied by Lemma~\ref{lem:level-decomposition}, and let $h$ be its maximum depth.
Its nonempty levels form an acyclic dicolouring, so $\adn(D)\leq h$.
A root-to-vertex tree path of maximum depth is a directed path of order $h$, and hence $h\leq L(D)$.
\end{proof}

If directed-path length is counted by arcs and $\ell(D)$ denotes the maximum such length, then $L(D)=\ell(D)+1$ and Theorem~\ref{thm:path-bound} reads $\adn(D)\leq\ell(D)+1$.

\section{Random tournaments and the optimised first-moment bound}\label{sec:random}

The uniform random tournament $\Tn$ is obtained by orienting every edge of $K_n$ independently, with the two directions equally likely.
An event holds \emph{asymptotically almost surely}, abbreviated a.a.s., if its probability tends to one as $n\to\infty$.
Throughout this section, all logarithms are to base $2$ unless otherwise indicated.

If an acyclic dicolouring has $q$ classes, then it saves $n-q$ colours relative to the all-singleton colouring.
We call each non-singleton colour class a \emph{block}; a block of size $2$ will be called a \emph{pair-block}.
A block of size $b$ contributes $b-1$ to this saving.

Throughout this section, put $r=\log_2(8/7)$, $a=4/r$, and $\beta=2/r$.
Numerically, $r\approx0.192645$, $a\approx20.763572$, and $\beta\approx10.381786$.

\subsection{Acyclic orientations of complete bipartite graphs}
Let $A_{p,q}$ be the number of acyclic orientations of $K_{p,q}$, where $p,q\ge2$, and define the \emph{acyclicity defect} $\delta_{p,q}=pq-\log_2 A_{p,q}$.
A uniformly random orientation of $K_{p,q}$ is acyclic with probability $2^{-\delta_{p,q}}$.
Stanley's theorem gives $A_{p,q}=|P_{K_{p,q}}(-1)|$, where $P_G$ is the chromatic polynomial~\cite{Stanley}.
In particular, as already used in~\cite{BJPY}, $A_{2,2}=14$, $A_{2,3}=46$, and $A_{3,3}=230$, so $\delta_{2,2}=r$.

\begin{lemma}\label{lem:bipartite-defect}
For all integers $p,q\ge2$,
$$
 \delta_{p,q}\ge r(p-1)(q-1).
$$
Equality holds for $p=q=2$.
\end{lemma}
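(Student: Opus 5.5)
The plan is to work with the acyclicity probability $f(p,q)=A_{p,q}/2^{pq}=2^{-\delta_{p,q}}$, extended to $p=1$ or $q=1$. There $K_{p,q}$ is a star, every orientation is acyclic, and $f(1,q)=f(p,1)=1$. In this language the lemma reads
$$f(p,q)\le (7/8)^{(p-1)(q-1)}.$$
The equality case is immediate from the data already quoted: $f(2,2)=14/16=7/8$.

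I would derive the inequality from a two-dimensional ``log-submodularity'' estimate. For all $p,q\ge2$ I aim to show
$$
 f(p,q)\,f(p-1,q-1)\;\le\;\tfrac78\, f(p-1,q)\,f(p,q-1).
$$
Put $g(p,q)=\log_2 f(p,q)$. The estimate says that the mixed second difference of $g$ is at most $-r$. Summing the mixed differences over the rectangle $[2,p]\times[2,q]$ telescopes to
$$g(p,q)-g(1,q)-g(p,1)+g(1,1)\le -r(p-1)(q-1).$$
The boundary terms vanish, so this is exactly $\delta_{p,q}\ge r(p-1)(q-1)$. The base instance $p=q=2$ is the equality $14\cdot 2\cdot\tfrac{1}{16}\cdot\tfrac12 = \tfrac78$. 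I would also check the instances $(2,3)$ and $(3,3)$ against $A_{2,3}=46$ and $A_{3,3}=230$ as a sanity check: for $(2,3)$ one gets $0.71875\le 0.7656$.

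To prove the mixed inequality, fix $X=\{x_1,\dots,x_p\}$, $Y=\{y_1,\dots,y_q\}$, and a uniformly random orientation. Let $E_{X'}$ be the event that the orientation restricted to $K_{X\setminus\{x_p\},Y}$ is acyclic, and similarly $E_{Y'}$ for $K_{X,Y\setminus\{y_q\}}$. Let $E_0$ be the event for the common core $K_{p-1,q-1}$. Then $E_{X'}\cap E_{Y'}\subseteq E_0$, and full acyclicity is contained in $E_{X'}\cap E_{Y'}\cap\{\text{no cycle through the arc } x_py_q\}$.

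Conditional on $E_0$, the events $E_{X'}$ and $E_{Y'}$ depend on disjoint edge sets: the star at $x_p$ and the star at $y_q$. Conditional on the core orientation, they are therefore independent. The required inequality becomes
$$\mathbb P(\text{acyclic}\mid \text{core})\le \tfrac78\,\mathbb P(E_{X'}\mid\text{core})\,\mathbb P(E_{Y'}\mid\text{core}),$$
averaged suitably. The factor $\tfrac78$ should come from the single new edge $x_py_q$ together with the 4-cycles $x_p y_q x_i y_j$ it closes. I would describe the admissible out-sets of $x_p$ and $y_q$ as up-sets of the reachability preorders induced by the core on $Y$ and on $X$. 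I would then show that, for each pair of such up-sets, at most a $\tfrac78$ fraction of the combined choices, including the two choices of direction for $x_py_q$, survive on average.

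The main obstacle is exactly this averaging step. Pointwise in the core the loss factor can be $1$, for instance when the core makes $X$ an antichain. So the proof must exploit a correlation: cores allowing many extensions of $x_p$ and $y_q$ are those in which the new cross edge is most constraining. If a direct conditional argument resists, my fallback is Stanley's formula. Evaluating $|P_{K_{p,q}}(-1)|$ gives the poly-Bernoulli expression
$$A_{p,q}=\sum_{k\ge0}(k!)^2 S(p+1,k+1)S(q+1,k+1).$$
I would then verify the mixed inequality from this sum, by a term-by-term comparison using recurrences for Stirling numbers, with a finite computer check for small $p,q$.
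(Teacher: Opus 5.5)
Your telescoping reduction is valid. After clearing powers of two, the mixed inequality reads $A_{p,q}A_{p-1,q-1}\le\tfrac74A_{p-1,q}A_{p,q-1}$. If it held for all $p,q\ge2$, summing over the rectangle with $f(1,\cdot)=f(\cdot,1)=1$ would give the lemma.

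The gap is that this inequality is never proved. It is strictly stronger than the lemma and carries all of its content. It looks true: the ratio $A_{p,q}A_{p-1,q-1}/(A_{p-1,q}A_{p,q-1})$ is $\tfrac74$ at $(2,2)$ and about $1.64$, $1.52$, $1.40$ at $(2,3)$, $(3,3)$, $(4,4)$, and for $p=2$ it follows from $A_{2,q}=2\cdot3^q-2^q$. But neither of your routes establishes it.

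\emph{The conditional route.} Given the core, let $U$ be the conditional probability that the star at $y_q$ is admissible (this is what $E_{X'}$ constrains) and $V$ the same for the star at $x_p$. Let $\lambda$ be the surviving fraction of admissible configurations, counting both directions of $x_py_q$. What you need is $\mathbb E[\lambda UV\mathbf 1_{E_0}]\,\Pr(E_0)\le\tfrac78\,\mathbb E[U\mathbf 1_{E_0}]\,\mathbb E[V\mathbf 1_{E_0}]$. Cores with few reachability relations make $U$ and $V$ large together, so they tend to be positively correlated. Already for $p=q=3$ one has $\mathbb E[UV\mid E_0]>\mathbb E[U\mid E_0]\,\mathbb E[V\mid E_0]$. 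Decoupling therefore costs you something before $x_py_q$ contributes anything. Proving that $\lambda$ compensates on exactly those cores is the whole problem, and you leave it open. Your example also points the wrong way: if every core arc goes from $X$ to $Y$, then $U=V=1$ and $\lambda=1-\tfrac12(1-2^{1-p})(1-2^{1-q})\le\tfrac78$, so these are favourable cores.

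\emph{The Stirling fallback.} This is not an argument either. A product of two Stirling sums admits no evident term-by-term comparison with another such product. A finite computer check also needs a uniform estimate for the remaining pairs, which you do not supply.

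The missing ingredient is a crude uniform upper bound on $A_{p,q}$, which makes any recursion unnecessary. The paper reads one off from a topological ordering. The relative order of the $p$ vertices on one side, together with the gap (one of $p+1$) occupied by each vertex on the other side, determines the orientation, so $A_{p,q}\le p!\,(p+1)^q$. For $p\le q$, the lemma then reduces to $pq-r(p-1)(q-1)\ge\log_2(p!)+q\log_2(p+1)$. This is affine and increasing in $q$; it is checked at $q=4$ for $p=2,3$ and on the diagonal $q=p$ for $p\ge4$. Only $(2,2)$, $(2,3)$, $(3,3)$ escape the crude bound (already $2\cdot3^2=18>14$), and these are settled by the exact counts $14$, $46$, $230$.
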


\begin{proof}
The statement is symmetric, so assume $2\le p\le q$.
The cases $(2,2),(2,3),(3,3)$ follow from the exact counts above: $\delta_{2,2}=r$, while $\delta_{2,3}\approx0.476438>2r$ and $\delta_{3,3}\approx1.154510>4r$.

Every acyclic orientation of $K_{p,q}$ has a topological ordering.
Read the $p$ vertices in the first part in the order in which they occur; there are at most $p!$
possibilities.
Each vertex of the second part lies in one of the $p+1$ gaps before, between, or after these vertices.
The order and gap choices determine every bipartite arc, and hence $A_{p,q}\le p!(p+1)^q$.
It is therefore enough to prove
$$
 F_p(q):=pq-r(p-1)(q-1)-\log_2(p!)-q\log_2(p+1)\ge0.
$$
For fixed $p$, this is affine in $q$ with slope $\sigma_p=p-r(p-1)-\log_2(p+1)$.
We have $\sigma_2>0$, and
$$
 \sigma_{p+1}-\sigma_p
 =1-r-\log_2\frac{p+2}{p+1}
 \ge1-r-\log_2(4/3)>0.
$$
Thus $F_p(q)$ increases with $q$.
Direct calculation gives $F_2(4)\approx0.082215$ and $F_3(4)\approx0.259167$.
For $p\ge4$, it remains to check $H(p):=F_p(p)$.
We have $H(4)\approx0.393519$, while
$$
 H(p+1)-H(p)
 =2p+1-r(2p-1)-2\log_2(p+1)
 -(p+1)\log_2\left(1+\frac1{p+1}\right).
$$
Using $\ln(1+x)\le x$, this is at least $2(1-r)p+(1+r)-2\log_2(p+1)-\log_2 e,$ which is positive at $p=4$ and strictly increasing for real $p\ge4$.
Hence $H(p)>0$ for every $p\ge4$.
\end{proof}

The factor $(p-1)(q-1)$ is measured in units of colour saving: a block of size $p$ saves $p-1$ colours.
Equality at $(2,2)$ is the first indication that pairs should be extremal.

\subsection{Block profiles and the exact first moment}
Fix a finite-support vector $\mathbf x=(x_2,x_3,\dots)$ of nonnegative integers.
A block system has \emph{profile} $\mathbf x$ if it contains exactly $x_b$ blocks of order $b$.
Set
$$
 s=\sum_{b\ge2}(b-1)x_b,
 \qquad M=\sum_{b\ge2}b x_b,
 \qquad z=\sum_{b\ge3}(b-2)x_b.
$$
Here $s$ is the colour saving, $M$ is the number of selected vertices, and $z$ is the excess over the all-pair-block profile.
We call the profile \emph{feasible at order $n$} when $M\leq n$; for an infeasible profile the associated count is zero.
The identity $M=2s-z$ follows from $b=2(b-1)-(b-2)$.

Let $N_{\mathbf x}$ count unordered families of disjoint vertex sets in $\Tn$ having profile $\mathbf x$ such that every block is transitive and every two blocks have an acyclic bipartite interaction.
Vertices outside the blocks receive private colours and impose no further condition.

\begin{proposition}\label{prop:profile-expectation}
For every feasible profile $\mathbf x$,
$$
 \mathbb E N_{\mathbf x}
 =\frac{n!}{(n-M)!\prod_{b\ge2}x_b!}\,2^{-K(\mathbf x)},
$$
where
$$
 K(\mathbf x)
 =\sum_{b\ge2}\binom b2x_b
 +\sum_{p<q}\delta_{p,q}x_px_q
 +\sum_{b\ge2}\delta_{b,b}\binom{x_b}{2}.
$$
\end{proposition}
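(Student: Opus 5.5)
By linearity of expectation, I will write $\mathbb E N_{\mathbf x}$ as the number of candidate block systems with profile $\mathbf x$ times the probability that a fixed system is admissible. I then check that this probability does not depend on which system is chosen.

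\emph{Counting the systems.} An unordered family with profile $\mathbf x$ arises from an ordered list of $M$ distinct vertices, of which there are $n!/(n-M)!$. Cut the list into consecutive segments: $x_2$ segments of length $2$, then $x_3$ of length $3$, and so on. Each unordered family is produced exactly $\prod_b x_b!\,(b!)^{x_b}$ times, once for each ordering of the blocks of equal size and each internal ordering of every block. The number of families is therefore $n!/\bigl((n-M)!\prod_b x_b!\,(b!)^{x_b}\bigr)$.

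\emph{The probability for a fixed system.} The events involved depend on pairwise disjoint sets of edges of $K_n$. The condition that a block $B$ is transitive uses only the $\binom b2$ edges inside $B$. The condition that two blocks $B,B'$ have an acyclic interaction uses only the edges of the complete bipartite graph between them. Distinct unordered pairs of blocks use disjoint edge sets, and none of these edges lies inside a block. Since edge orientations are independent, the probability factorises as a product over blocks and over pairs of blocks. A block of order $b$ is transitive with probability $b!\,2^{-\binom b2}$, because the transitive tournaments on $b$ labelled vertices correspond to the $b!$ linear orders. A pair of blocks of orders $p,q$ has an acyclic interaction with probability $2^{-\delta_{p,q}}$, by the definition of $\delta_{p,q}$ and the fact that $K_{p,q}$ receives a uniformly random orientation. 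Counting pairs of blocks by type gives $x_px_q$ pairs of orders $p<q$ and $\binom{x_b}2$ pairs of equal order $b$. Multiplying everything together, the factors $(b!)^{x_b}$ cancel those in the denominator of the count, and what remains is exactly $2^{-K(\mathbf x)}$ times $n!/\bigl((n-M)!\prod_b x_b!\bigr)$.

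The only step that needs care is this cancellation together with the independence claim. One must confirm that the acyclicity condition on $T[X,Y]$ involves only the cross edges between $X$ and $Y$, which holds because $T[X,Y]$ contains only those arcs. One must also confirm that the internal-order factor $b!$ enters precisely once per block, both in the overcount and in the transitivity probability. The convention $\delta_{p,q}$ with $p,q\ge2$ is sufficient, since singletons are not blocks and impose no conditions.
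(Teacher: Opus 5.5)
Your proposal is correct and follows the paper's proof. Like the paper, you count the unordered block families as $n!/\bigl((n-M)!\prod_b x_b!\,(b!)^{x_b}\bigr)$, multiply by the independent probabilities $b!\,2^{-\binom b2}$ for each block and $2^{-\delta_{p,q}}$ for each pair of blocks, and cancel the $(b!)^{x_b}$ factors; your justifications of the overcount and of independence over disjoint edge sets only spell out what the paper states in one line each.
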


\begin{proof}
There are $n!/((n-M)!\prod_{b\ge2}(b!)^{x_b}x_b!)$ unordered block families.
A fixed $b$-set is transitive with probability $b!/2^{\binom b2}$.
Two fixed blocks of orders $p$ and $q$ interact acyclically with probability $2^{-\delta_{p,q}}$.
These events concern disjoint sets of tournament edges and are therefore independent.
The factors $(b!)^{x_b}$ cancel, giving the formula.
\end{proof}

An \emph{acyclic matching} is a family of pairwise disjoint vertex pairs such that every two pairs have an acyclic bipartite interaction.
For $2s\leq n$, let $X_s$ count acyclic matchings with $s$ pairs.
This is the all-pair-block profile $x_2=s$, and Proposition~\ref{prop:profile-expectation} gives the exact formula
\begin{equation}\label{eq:matching-expectation}
 \mathbb E X_s
 =\frac{n!}{(n-2s)!s!}\,2^{-s-r\binom s2}.
\end{equation}

\subsection{Dominance of size-\texorpdfstring{$2$}{2} blocks}
\begin{proposition}\label{prop:pair-dominance}
Let $\mathbf x$ have saving $s$ and excess $z$, and assume $2s\leq n$.
Then
$$
 \frac{\mathbb E N_{\mathbf x}}{\mathbb E X_s}
 \le
 \left(\frac{s^2}{n-2s+1}\right)^z.
$$
Consequently, if $s=O(\log n)$ and $z\ge1$, then $\mathbb E N_{\mathbf x}=o(\mathbb E X_s)$ uniformly over all such profiles.
\end{proposition}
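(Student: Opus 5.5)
The plan is to compute the ratio directly from Proposition~\ref{prop:profile-expectation} and formula \eqref{eq:matching-expectation}. I will split it into a factorial part and a power-of-two part, and show that the factorial part is at most $\bigl(s^2/(n-2s+1)\bigr)^z$ while the power-of-two part is at most $1$. The ratio is
$$
 \frac{\mathbb E N_{\mathbf x}}{\mathbb E X_s}
 =\frac{(n-2s)!}{(n-M)!}\cdot\frac{s!}{\prod_{b\ge2}x_b!}\cdot 2^{-(K(\mathbf x)-s-r\binom s2)} .
$$
I will treat the three factors separately. Throughout, I use $M=2s-z$, and I write $B=\sum_b x_b$ for the number of blocks.

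\textbf{The first factor.} Since $n-M=n-2s+z$, we have $(n-M)!/(n-2s)!=(n-2s+1)\cdots(n-2s+z)\ge(n-2s+1)^z$. Hence the first factor is at most $(n-2s+1)^{-z}$.

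\textbf{The second factor.} I drop all $x_b!$ with $b\ge3$ and bound $s!/x_2!$ by $s^{\,s-x_2}$. Here $s-x_2=\sum_{b\ge3}(b-1)x_b$, and $b-1\le 2(b-2)$ for $b\ge3$, so $s-x_2\le 2z$. This gives the bound $s^{2z}$.

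\textbf{The exponent.} This is the main step; I must show $K(\mathbf x)-s-r\binom s2\ge0$. Assign to each block $i$ the weight $w_i=b_i-1$, where $b_i$ is its order, so that $s=\sum_i w_i$.
\begin{itemize}
\item By Lemma~\ref{lem:bipartite-defect}, every pair of distinct blocks contributes at least $r\,w_iw_j$ to $K$. This covers both the $p<q$ terms and the same-size terms, since $\delta_{b,b}\ge r(b-1)^2$.
\item The transitivity term satisfies $\binom b2=(b-1)+\tfrac12(b-1)(b-2)$.
\item Expanding gives $r\binom s2=\tfrac r2\bigl(2\sum_{i<j}w_iw_j+\sum_i w_i^2-\sum_i w_i\bigr)$.
\end{itemize}
Subtracting, the cross terms cancel and
$$
 K(\mathbf x)-s-r\tbinom s2\ \ge\ \sum_i\Bigl(\tfrac12 w_i(w_i-1)-\tfrac r2 w_i(w_i-1)\Bigr)=\tfrac{1-r}{2}\sum_{b}x_b(b-1)(b-2)\ge0 ,
$$
because $r<1$. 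So the power of two is at most $1$.

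Multiplying the three bounds gives the claimed inequality.

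\textbf{The consequence.} If $s=O(\log n)$, then $s^2/(n-2s+1)=O(\log^2 n/n)\to0$, and this bound does not depend on $\mathbf x$. Hence for $z\ge1$ the ratio is at most $s^2/(n-2s+1)=o(1)$, uniformly over all such profiles.

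The only delicate point is the exponent bookkeeping. One must check that the same-size interactions $\delta_{b,b}\binom{x_b}{2}$ together with the $p<q$ interactions account for every unordered pair of blocks exactly once, so that the cross terms of $s^2$ cancel exactly.
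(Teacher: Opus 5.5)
Your proposal is correct and is essentially the paper's proof. It uses the same split of the ratio into the falling-factorial factor, bounded by $(n-2s+1)^{-z}$, and the multinomial factor $s!/x_2!\le s^{2z}$; your inequality $b-1\le 2(b-2)$ is the paper's observation $R\le z$ in another form. The exponent bookkeeping via the bipartite-defect lemma is also the same, giving the surplus $\tfrac{1-r}{2}\sum_b (b-1)(b-2)x_b\ge 0$. In the consequence you implicitly use $s^2/(n-2s+1)\le 1$ for large $n$ to pass from the $z$-th power to the first power; this holds and is harmless.
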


\begin{proof}
Index the non-singleton blocks by $B_1,\dots,B_t$, put $b_i=|B_i|$ and $d_i=b_i-1$, and note that $\sum_i d_i=s$.
By Lemma~\ref{lem:bipartite-defect},
\begin{align*}
 K(\mathbf x)
 &\ge \sum_i\left(d_i+\binom{d_i}{2}\right)+r\sum_{i<j}d_id_j\\
 &=s+r\binom s2+(1-r)\sum_i\binom{d_i}{2}\\
 &\ge s+r\binom s2.
\end{align*}
Thus the exponential factor in Proposition~\ref{prop:profile-expectation} is no larger than the all-pair-block exponential factor in \eqref{eq:matching-expectation}, and
$$
 \frac{\mathbb E N_{\mathbf x}}{\mathbb E X_s}
 \le
 \frac{(n-2s)!}{(n-M)!}\,
 \frac{s!}{\prod_{b\ge2}x_b!}.
$$
Since $M=2s-z$ and $2s\le n$, the first factor is the reciprocal of $z$ positive consecutive integers beginning with $n-2s+1$, and is therefore at most $(n-2s+1)^{-z}$.
Let $R=\sum_{b\ge3}x_b$.
Since every large block contributes at least one to $z$, we have $R\le z$, while $s=x_2+z+R$.
Hence $s-x_2\le2z$, and therefore $s!/\prod_{b\ge2}x_b!\le s!/x_2!\le s^{2z}$.
Multiplying the bounds proves the claim.
\end{proof}

Let $Y_s=\sum_{\mathbf x:\,\sum(b-1)x_b=s}N_{\mathbf x}$ count all block systems that can occur as the non-singleton classes of an acyclic dicolouring saving exactly $s$ colours.

\begin{corollary}\label{cor:pair-asymptotics}
For every fixed $C>0$, uniformly for $s\le C\log_2n$,
$$
 \mathbb EY_s=(1+o(1))\mathbb EX_s.
$$
\end{corollary}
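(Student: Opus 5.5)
We will write $\mathbb E Y_s=\mathbb E X_s+\sum_{z\ge1}\sum_{\mathbf x}\mathbb E N_{\mathbf x}$, where the inner sum is over profiles with saving $s$ and excess $z$. The profile with $z=0$ is exactly $x_2=s$, and for it $N_{\mathbf x}=X_s$. It therefore suffices to show that the remaining double sum is $o(\mathbb E X_s)$, uniformly for $s\le C\log_2 n$. For such $s$ we have $2s\le n$ once $n$ is large, so Proposition~\ref{prop:pair-dominance} applies to every profile with saving $s$ (infeasible profiles contribute zero). It gives
$$
\mathbb E N_{\mathbf x}\le \mathbb E X_s\left(\frac{s^2}{n-2s+1}\right)^z .
$$

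The key step is to count how many profiles have given saving $s$ and given excess $z$. Such a profile is determined by the multiset of block sizes $b\ge3$, that is, by the parts $b-1\ge2$. We have $\sum_{b\ge3}(b-1)x_b=s-x_2$ and $\sum_{b\ge3}(b-2)x_b=z$, so the multiset $\{b-2\}$ is a partition of $z$. Conversely, a partition of $z$ determines the large-block multiset, and then $x_2=s-z-R$ is forced. Hence the number of profiles with excess $z$ is at most $p(z)\le 2^{z}$, the partition number of $z$; any crude bound such as $2^{z-1}$ compositions works just as well.

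Combining the two bounds and putting $\varepsilon_n=2C^2\log_2^2 n/(n-2C\log_2 n+1)$, which tends to $0$, we get
$$
\frac{\mathbb E Y_s-\mathbb E X_s}{\mathbb E X_s}\le\sum_{z\ge1}2^z\left(\frac{s^2}{n-2s+1}\right)^z\le\sum_{z\ge1}\varepsilon_n^z=\frac{\varepsilon_n}{1-\varepsilon_n}=o(1).
$$
This bound is independent of $s$ in the range, which gives the required uniformity. The lower bound $\mathbb E Y_s\ge\mathbb E X_s$ is trivial, since all terms are nonnegative. Finally, $z\le s$ always holds, so the sum is effectively finite and no convergence issues arise.

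The main obstacle, modest here, is making sure the profile count does not grow with $s$ for fixed $z$. The point is that $x_2$ is determined by the large blocks, so the count depends only on $z$. This is what turns the pointwise dominance of Proposition~\ref{prop:pair-dominance} into a summable geometric series.
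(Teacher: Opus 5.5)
Your proposal is correct and follows essentially the same route as the paper. Both arguments split $\mathbb E Y_s$ by excess $z$, bound the number of profiles of saving $s$ and excess $z$ by $p(z)\le 2^z$, apply Proposition~\ref{prop:pair-dominance} to each profile, and sum the resulting geometric series. The only cosmetic difference is that your explicit $\varepsilon_n$ replaces the paper's estimate $n-2s+1\ge n/2$.
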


\begin{proof}
The all-pair-block profile contributes $\mathbb EX_s$.
For fixed positive excess $z$, the entries $x_b$ with $b\ge3$ encode a partition of $z$ into parts $b-2$, so there are at most $p(z)\le2^z$ such profiles, where $p(z)$ is the integer partition function.
When $s\le C\log_2n$, for large $n$ we have $n-2s+1\ge n/2$.
By Proposition~\ref{prop:pair-dominance}, the total contribution relative to $\mathbb EX_s$ is at most
$$
 \sum_{z\ge1}2^z\left(\frac{s^2}{n-2s+1}\right)^z
 \le\sum_{z\ge1}\left(\frac{4s^2}{n}\right)^z=o(1).
$$
\end{proof}

Thus, throughout the logarithmic regime, pair-blocks supply a $1+o(1)$ fraction of the complete first moment.
This profile comparison is logically separate from the threshold calculation below.

\subsection{The first-moment threshold and the explicit random theorem}
Write $L_n=\log_2n$ and $\ell_n=\log_2L_n$.
For $k=O(\log n)$, Stirling's formula and \eqref{eq:matching-expectation} give
\begin{align}
 \log_2\mathbb EX_k
 &=2kL_n-\frac r2k^2-k\log_2k
 +\left(\log_2e-1+\frac r2\right)k-O(\log k).
 \label{eq:log-first-moment}
\end{align}
Set $k=aL_n-\beta\ell_n+C$, where $C$ remains in a bounded interval.
The expansion below is uniform for bounded $C$; in particular, uniformly for $C\in[0,1]$,
\begin{equation}\label{eq:threshold-linear}
 \log_2\mathbb EX_k
 =\left(2+a(\log_2e-1-\log_2a)-2C\right)L_n
 +O(\ell_n^2).
\end{equation}
The bracketed coefficient is approximately $-79.669104$ at $C=0$ and decreases as $C$ increases.
Hence it is uniformly negative for $C\in[0,1]$.

\begin{proposition}\label{prop:two-sided-threshold}
Let $c>0$ and $d\in\mathbb R$ be fixed, and let $s=cL_n-d\ell_n+O(1)$.
Then
\begin{equation}\label{eq:two-sided-threshold}
 \log_2\mathbb E X_s
 =\left(2c-\frac r2c^2\right)L_n^2
 +\bigl((-2+rc)d-c\bigr)L_n\ell_n+O(L_n).
\end{equation}
Consequently, the coefficient of $L_n^2$ is positive for $0<c<a$ and negative for $c>a$.
At $c=a$, the coefficient of $L_n\ell_n$ equals $2d-a$; it is positive for $d>\beta$ and negative for $d<\beta$.
\end{proposition}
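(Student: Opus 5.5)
The plan is to substitute the given form of $s$ directly into the Stirling expansion \eqref{eq:log-first-moment}, which holds for every $k=O(\log n)$ and so in particular for $s=cL_n-d\ell_n+O(1)$. The expansion reads $\log_2\mathbb EX_s=2sL_n-\frac r2s^2-s\log_2 s+O(s)$, where $O(s)$ absorbs the linear term with constant coefficient $\log_2e-1+\frac r2$ together with the $O(\log s)$ remainder. Since $c>0$, we have $s\to\infty$ and $s=\Theta(L_n)$, so this $O(s)$ term is already $O(L_n)$. It remains to expand each of the three main terms to precision $O(L_n)$, using only $\ell_n=o(L_n)$ and $\ell_n^2=O(L_n)$.

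The term-by-term expansion runs as follows. First, $2sL_n=2cL_n^2-2dL_n\ell_n+O(L_n)$. Second, squaring gives $s^2=c^2L_n^2-2cdL_n\ell_n+O(\ell_n^2)+O(L_n)$; the cross terms with the $O(1)$ error are $O(L_n)$, and $d^2\ell_n^2=O(L_n)$. Hence $-\frac r2 s^2=-\frac r2c^2L_n^2+rcd\,L_n\ell_n+O(L_n)$. Third, for the entropy-type term, write $s=cL_n(1+O(\ell_n/L_n))$, so that $\log_2 s=\ell_n+\log_2c+O(\ell_n/L_n)=\ell_n+O(1)$. Then $s\log_2 s=(cL_n-d\ell_n+O(1))(\ell_n+O(1))=cL_n\ell_n+O(L_n)$, because $d\ell_n^2=O(L_n)$. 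Summing the three contributions gives exactly \eqref{eq:two-sided-threshold}, with coefficient $2c-\frac r2c^2$ on $L_n^2$ and $-2d+rcd-c=(-2+rc)d-c$ on $L_n\ell_n$.

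The sign statements then follow by inspection. The coefficient $2c-\frac r2c^2=c\,(2-\frac r2c)$ vanishes at $c=4/r=a$, is positive on $(0,a)$ and is negative for $c>a$. At $c=a$ we have $rc=4$, so the $L_n\ell_n$ coefficient becomes $2d-a$. This vanishes at $d=a/2=2/r=\beta$, is positive for $d>\beta$ and is negative for $d<\beta$.

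The only step needing care is the bookkeeping of error terms: each discarded product must be shown to be $O(L_n)$ rather than merely $o(L_n\ell_n)$. The two places to watch are $s\log_2 s$, where I first isolate $\log_2 s=\ell_n+O(1)$, and the $d^2\ell_n^2$ term in $s^2$, where I use $\ell_n^2=O(L_n)$. I should also confirm that the $O(\log k)$ remainder and all constants in \eqref{eq:log-first-moment} are uniform over the admissible $O(1)$ perturbations of $s$. This holds because that expansion is stated uniformly for $k=O(\log n)$.
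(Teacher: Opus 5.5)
Your proposal is correct and follows the paper's proof. Like the paper, you substitute $s=cL_n-d\ell_n+O(1)$ into \eqref{eq:log-first-moment}, take the $L_n^2$ and $L_n\ell_n$ coefficients from $2sL_n-\frac r2 s^2$ and $-s\log_2 s$, and read off the signs from $a=4/r$ and $\beta=a/2$. Your explicit checks that $d^2\ell_n^2=O(L_n)$ and $\log_2 s=\ell_n+O(1)$ spell out error bookkeeping that the paper leaves implicit.
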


\begin{proof}
Substitute $s=cL_n-d\ell_n+O(1)$ into \eqref{eq:log-first-moment}.
The terms $2sL_n-rs^2/2$ contribute $(2c-rc^2/2)L_n^2+(-2+rc)dL_n\ell_n+O(L_n)$, while $-s\log_2s$ contributes $-cL_n\ell_n+O(L_n)$.
All remaining terms are $O(L_n)$, which gives \eqref{eq:two-sided-threshold}.
The sign assertions follow from $a=4/r$ and $\beta=a/2=2/r$.
\end{proof}

Since $Y_s\ge X_s$, divergence of the pair-block contribution prevents this Markov/union-bound nonexistence argument from proving a stronger pair of threshold coefficients.
This is a limitation of the method: divergence of the first moment does not imply the existence of a colouring.

\begin{theorem}\label{thm:random-explicit}
Let $s_n=\lceil a\log_2n-\beta\log_2\log_2n\rceil$.
Then, asymptotically almost surely, $n-\adn(\Tn)\le s_n-1$.
\end{theorem}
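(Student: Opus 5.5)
The argument is a first-moment (Markov plus union bound) computation. It rests on the monotonicity supplied by Lemma~\ref{lem:refinement} together with the threshold expansion \eqref{eq:threshold-linear}.

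\emph{Reduction to one saving value.} Suppose $n-\adn(\Tn)\ge s_n$. Take an optimal acyclic dicolouring, whose saving is $n-\adn(\Tn)\ge s_n$. Split one of its blocks off into smaller pieces: breaking a vertex out of a block of size $b\ge2$ into a singleton lowers the saving by exactly one. By Lemma~\ref{lem:refinement} the colouring stays acyclic after each such step. Repeating, we reach an acyclic dicolouring whose saving is exactly $s_n$. Its non-singleton classes then form a block system counted by $Y_{s_n}$. Hence
\[
 \Pr\bigl(n-\adn(\Tn)\ge s_n\bigr)\le\Pr(Y_{s_n}\ge1)\le\mathbb E Y_{s_n}.
\]
This step needs $s_n\le n-1$, which is trivial for large $n$.

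\emph{Comparison with matchings.} Since $s_n=O(\log_2 n)$, Corollary~\ref{cor:pair-asymptotics} gives $\mathbb EY_{s_n}=(1+o(1))\mathbb EX_{s_n}$. The condition $2s_n\le n$ needed to define $X_{s_n}$ holds for large $n$.

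\emph{Threshold evaluation.} Write $s_n=aL_n-\beta\ell_n+C_n$, where $C_n\in[0,1)$ comes from the ceiling. Formula \eqref{eq:threshold-linear} holds uniformly for $C\in[0,1]$, so
\[
 \log_2\mathbb EX_{s_n}\le -\kappa L_n+O(\ell_n^2),
\]
where $\kappa>0$ is the minimum over $C\in[0,1]$ of the negative of the bracketed coefficient. That coefficient is about $-79.67$ at $C=0$ and decreases in $C$, so $\kappa>0$. Since $\ell_n^2=o(L_n)$, we get $\mathbb EX_{s_n}\le n^{-\kappa+o(1)}\to0$. Therefore $\mathbb EY_{s_n}\to0$, and a.a.s.\ $n-\adn(\Tn)\le s_n-1$.

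\emph{Main obstacle.} The delicate point is justifying \eqref{eq:threshold-linear} from \eqref{eq:log-first-moment}. Substituting $k=aL_n-\beta\ell_n+C$, the $L_n^2$ terms cancel because $2a-\tfrac r2a^2=0$. The $L_n\ell_n$ terms also cancel: the terms $2kL_n-\tfrac r2k^2$ contribute $(-2+ra)\beta=2\beta$, and $-k\log_2k$ contributes $-a$, so the total is $2\beta-a=0$. The $L_n$ terms must then be collected carefully. From $2kL_n-\tfrac r2k^2$ come $2CL_n-raCL_n=-2CL_n$. From $-k\log_2k$, using $\log_2k=\log_2a+\ell_n+O(\ell_n/L_n)$, come $-a\log_2a\,L_n$ together with $\beta\ell_n\cdot\ell_n$, which is $O(\ell_n^2)$. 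The linear term contributes $(\log_2e-1+r/2)aL_n$. I would check that the sum matches the displayed bracket; the $ar/2=2$ term supplies the ``$2$''.

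All error terms must be uniform in $C$. This is where the $O(\log k)$ Stirling error and the $(n-2k)!$ approximation enter; the latter contributes $O(k^2/n)$. If the paper's display is taken as given, this step is immediate, and the proof reduces to the monotone reduction above.
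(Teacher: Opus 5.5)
Your proposal is correct and follows the paper's proof step for step: split off vertices to reach an acyclic dicolouring saving exactly $s_n$ (Lemma~\ref{lem:refinement}), apply Markov's inequality to $Y_{s_n}$, pass to $X_{s_n}$ via Corollary~\ref{cor:pair-asymptotics}, and conclude from the uniform expansion \eqref{eq:threshold-linear} with $C_n\in[0,1)$. Your extra check of \eqref{eq:threshold-linear} from \eqref{eq:log-first-moment} is also accurate, including the cancellation of the $L_n^2$ and $L_n\ell_n$ terms and the origin of the constant $2=ar/2$; the paper states that expansion without derivation.
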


\begin{proof}
Write $s_n=aL_n-\beta\ell_n+C_n$, where $0\le C_n<1$.
The uniform form of \eqref{eq:threshold-linear} gives $\mathbb EX_{s_n}=o(1)$, and Corollary~\ref{cor:pair-asymptotics} gives $\mathbb EY_{s_n}=o(1)$.

Suppose an acyclic dicolouring saves at least $s_n$ colours.
If it saves more, split one vertex at a time from a non-singleton class.
By Lemma~\ref{lem:refinement}, every intermediate partition remains an acyclic dicolouring, and every split lowers the saving by exactly one.
Hence there is an acyclic dicolouring saving exactly $s_n$ colours, so $Y_{s_n}\ge1$.
Markov's inequality now yields
$$
 \Pr(n-\adn(\Tn)\ge s_n)\le\Pr(Y_{s_n}\ge1)\le\mathbb EY_{s_n}=o(1).
$$
\end{proof}

\begin{remark}\label{rem:method-optimality}
Corollary~\ref{cor:pair-asymptotics} and Proposition~\ref{prop:two-sided-threshold} show that the coefficients $a$ and $\beta$ are optimal for the unweighted first moment of $Y_s$, equivalently for this particular union-bound scheme.
No optimality is asserted for weighted variants or for the true typical value of $\adn(\Tn)$.
\end{remark}

\subsection{A path consequence}
\begin{corollary}[Asymptotic sharpness of the path bound]\label{cor:path-sharp}
For the uniform random tournament $\Tn$, a.a.s.
$\frac{\adn(\Tn)}{L(\Tn)}\longrightarrow1.$
Consequently, there is no universal $c<1$ such that $\adn(D)\le cL(D)+O(1)$ for every oriented graph $D$.
\end{corollary}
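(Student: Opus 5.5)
The proof will be a sandwich argument that combines Theorem~\ref{thm:path-bound} with Theorem~\ref{thm:random-explicit}. For the upper bound, $L(\Tn)\le n$ holds trivially, since a directed path has at most $n$ vertices. Moreover, R\'edei's theorem says every tournament has a Hamiltonian directed path, so in fact $L(\Tn)=n$ deterministically. Theorem~\ref{thm:path-bound} then gives $\adn(\Tn)\le L(\Tn)=n$, so the ratio is at most $1$. For the lower bound, Theorem~\ref{thm:random-explicit} gives, a.a.s., $\adn(\Tn)\ge n-s_n+1$ with $s_n=\lceil a\log_2 n-\beta\log_2\log_2 n\rceil=O(\log n)$. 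Dividing by $L(\Tn)=n$ yields
\[
1\ \ge\ \frac{\adn(\Tn)}{L(\Tn)}\ \ge\ 1-\frac{a\log_2 n}{n}\ \longrightarrow\ 1
\]
on an event of probability $1-o(1)$, which is exactly the a.a.s.\ convergence claimed.

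For the consequence, I will argue by contradiction. Suppose $\adn(D)\le cL(D)+K$ held for all oriented graphs $D$, with some fixed $c<1$ and constant $K$. Because the lower bound above holds with probability tending to one, for every sufficiently large $n$ there exists a tournament $T$ of order $n$ with $\adn(T)\ge n-a\log_2 n$ and $L(T)=n$. The assumed inequality would then give $n-a\log_2 n\le cn+K$, that is, $(1-c)n\le a\log_2 n+K$. This fails for large $n$.

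No step is a real obstacle here; the only point needing care is the justification that $L(\Tn)=n$. If I want to avoid citing R\'edei~\cite{Redei}, I can instead use only the trivial bound $L\le n$ together with $\adn\le L$: this gives $n-O(\log n)\le\adn(\Tn)\le L(\Tn)\le n$ a.a.s., so both $\adn(\Tn)/n$ and $L(\Tn)/n$ tend to $1$, and hence so does their ratio.
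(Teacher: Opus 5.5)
Your proposal is correct and follows essentially the same approach as the paper: Rédei's theorem gives $L(\Tn)=n$, and Theorem~\ref{thm:random-explicit} supplies the matching lower bound $\adn(\Tn)\ge n-O(\log n)$ a.a.s. Your explicit upper bound via Theorem~\ref{thm:path-bound} (or the all-singleton colouring) and your R\'edei-free variant $n-O(\log n)\le\adn(\Tn)\le L(\Tn)\le n$ are harmless additions.
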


\begin{proof}
By R\'edei's theorem~\cite{Redei}, every tournament has a directed Hamilton path, so $L(\Tn)=n$.
Apply \Cref{thm:random-explicit}.
\end{proof}

\section{Critical orders in oriented graphs}\label{sec:critical}

For $k\geq1$, define
\begin{equation}\label{eq:mk-definition}
 m(k)=\min\{|V(D)|:D\text{ is a finite oriented graph and }\adn(D)\geq k\}.
\end{equation}
An oriented graph $D$ is \emph{$k$-critical} if $\adn(D)=k$ and every proper induced subdigraph has acyclic dichromatic number at most $k-1$.

\begin{proposition}\label{prop:tournament-reduction}
The value of $m(k)$ is unchanged if the minimum in \eqref{eq:mk-definition} is restricted to tournaments.
Moreover, every minimum-order oriented graph $D$ with $\adn(D)\geq k$ is $k$-critical.
\end{proposition}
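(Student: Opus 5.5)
The plan has two parts, matching the two assertions of the statement. Both rest on one monotonicity fact: adding arcs to an oriented graph, without creating digons, never decreases $\adn$.

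\textbf{Step 1 (arc monotonicity and tournament completion).} Let $D$ be an oriented graph and let $D'$ be obtained by adding an arc $uv$, where $u,v$ are nonadjacent in $D$. Take any acyclic dicolouring $\mathcal C$ of $D'$. Every induced subdigraph $D[C_i]$ is a subdigraph of $D'[C_i]$, and every bipartite subdigraph $D[C_i,C_j]$ is a subdigraph of $D'[C_i,C_j]$. A subdigraph of an acyclic digraph is acyclic, so $\mathcal C$ is also an acyclic dicolouring of $D$. Hence $\adn(D)\le\adn(D')$.

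Now orient every missing edge of $U(D)$ arbitrarily. This yields a tournament $T$ with $V(T)=V(D)$ and $\adn(T)\ge\adn(D)$. Therefore the minimum in \eqref{eq:mk-definition} is attained by some tournament of the same order, and the tournament-restricted minimum cannot be smaller than $m(k)$, since tournaments are oriented graphs. This proves the first assertion.

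\textbf{Step 2 (vertex deletion lowers $\adn$ by at most one).} For a vertex $v$ of $D$, take an optimal acyclic dicolouring of $D-v$ and add the singleton class $\{v\}$. The new class is trivially acyclic. For any old class $C_i$, the bipartite digraph $D[\{v\},C_i]$ is a star, so any directed cycle in it would have to return to $v$ through $C_i$ without passing through $\{v\}$ again. A cycle in a bipartite digraph alternates between its two sides, and with only one vertex on one side the cycle would have length two. That is a digon, which is excluded because $D$ is oriented. Hence $\adn(D)\le\adn(D-v)+1$. More generally, Lemma~\ref{lem:refinement} shows that $\adn$ of induced subdigraphs is monotone: restricting an acyclic dicolouring to an induced subdigraph keeps it acyclic.

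\textbf{Step 3 (criticality).} Let $D$ have minimum order $m(k)$ with $\adn(D)\ge k$. Every proper induced subdigraph $H$ has fewer vertices, so $\adn(H)\le k-1$ by the minimality of $m(k)$. In particular, for any vertex $v$ we have $\adn(D-v)\le k-1$, and Step 2 gives $\adn(D)\le k$. Hence $\adn(D)=k$. This, together with the bound on all proper induced subdigraphs, is exactly $k$-criticality.

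One edge case needs care: $D-v$ must be nonempty, because digraphs are assumed nonempty. For $k=1$ the minimum order is $1$, and $D=K_1$ is vacuously $1$-critical. For $k\ge2$, any graph with $\adn\ge k$ has at least two vertices, so $D-v$ is nonempty.

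None of these steps is a real obstacle. The only points needing care are that added arcs may only make colourings harder (Step 1), and the singleton-class argument in Step 2, which is where the oriented hypothesis is used.
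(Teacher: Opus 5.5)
Your proof is correct and follows essentially the same route as the paper. You complete $D$ to a tournament on the same vertex set, so that every acyclic dicolouring of $T$ is one of $D$; you then obtain $\adn(D)\le\adn(D-v)+1$ by giving $v$ a private class, and combine this with minimality of $m(k)$ for proper induced subdigraphs. One small point: your aside attributing induced-subdigraph monotonicity to Lemma~\ref{lem:refinement} is a misattribution, since that lemma concerns splitting classes rather than deleting vertices, but the remark is never used.
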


\begin{proof}
Complete an oriented graph $D$ to a tournament $T$ on the same vertex set by orienting every missing edge.
Every acyclic dicolouring of $T$ is also one of $D$, so $\adn(D)\leq\adn(T)$.
Since every tournament is an oriented graph, the two minima coincide.

If $D$ has minimum order, each proper induced subdigraph has acyclic dichromatic number at most $k-1$.
For $k\geq2$, colour $D-v$ with at most $k-1$ colours and give $v$ a private colour.
The bipartite oriented subdigraph between a singleton and another class contains no directed cycle, so this is an acyclic dicolouring using at most $k$ colours.
Hence $\adn(D)=k$; the case $k=1$ is immediate.
\end{proof}

Thus numerical questions about $m(k)$ reduce to tournaments, although the parameter itself is defined for all finite oriented graphs.
The classification statements below concern tournament witnesses only.

\subsection{General bounds}
We first invert the deterministic and probabilistic estimates from the preceding sections.

\begin{lemma}\label{lem:transitive-log}
Every tournament of order $n$ contains a transitive subtournament of order at least $\lfloor\log_2n\rfloor+1$.
Consequently, $\adn(T)\leq n-\lfloor\log_2n\rfloor$ for every tournament $T$ of order $n$.
\end{lemma}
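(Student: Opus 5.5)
The first claim is the classical Erd\H{o}s--Moser/Stearns bound, and I will prove it by the usual greedy halving argument. Let $t=\lfloor\log_2 n\rfloor$, so $n\ge 2^t$. Pick any vertex $v_1$ of $T$. Since $d^+(v_1)+d^-(v_1)=n-1$, one of the two neighbourhoods has at least $\lceil (n-1)/2\rceil\ge\lfloor n/2\rfloor$ vertices; call it $W_1$ and record whether it is the out- or in-neighbourhood. Repeat inside $T[W_1]$: choose $v_2\in W_1$, pass to the larger of its out- and in-neighbourhoods within $W_1$, and so on.

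An induction shows that after $j$ steps the remaining set has at least $\lfloor n/2^j\rfloor$ vertices. This uses $\lfloor\lfloor m/2^j\rfloor/2\rfloor=\lfloor m/2^{j+1}\rfloor$ and $\lceil(m-1)/2\rceil\ge\lfloor m/2\rfloor$. Hence the set is nonempty for all $j\le t$, and we obtain $t+1$ vertices $v_1,\ldots,v_{t+1}$.

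Next I show these vertices induce a transitive subtournament. Each $v_i$ either dominates all later $v_j$ or is dominated by all later $v_j$. Place $v_i$ at the left end or the right end of a growing ordering of the later vertices, according to this choice. Every arc then points left to right, so $T[\{v_1,\ldots,v_{t+1}\}]\cong\TT_{t+1}$. Equivalently, there is no directed triangle, using Lemma~\ref{lem:local}(a).

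For the consequence, colour this transitive set $S$ with one colour and give every other vertex a private colour. The class $T[S]$ is acyclic. Any bipartite subdigraph between two classes, at least one of which is a singleton, is a star with no two-way arcs, since $T$ is oriented, so it contains no directed cycle. All other pairs of classes are two singletons. Hence this is an acyclic dicolouring with $n-(t+1)+1=n-\lfloor\log_2 n\rfloor$ classes.

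The only mildly delicate step is the floor bookkeeping in the halving induction. Everything else is immediate.
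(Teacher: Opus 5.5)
Your proof is correct and follows essentially the same route as the paper: the paper phrases the greedy halving as an induction (adjoin a vertex to a transitive set lying in its larger neighbourhood), and then colours the transitive set with one colour and every other vertex with a private colour. Your floor bookkeeping and your check that no directed cycle can involve a singleton class in an oriented graph simply make explicit the details the paper leaves implicit.
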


\begin{proof}
The first assertion follows by induction: a vertex has at least half of the remaining vertices as out-neighbours or at least half as in-neighbours, and adjoining it to a transitive set in that neighbourhood preserves transitivity.
Give such a transitive set one colour and every remaining vertex a private colour.
\end{proof}

Bang-Jensen, Picasarri-Arrieta, and Yeo proved that every tournament $T$ of order $n$ satisfies~\cite{BJPY}
\begin{equation}\label{eq:BJPY-universal}
 \adn(T)\leq n-\alpha\log_2n+27,
 \qquad \alpha=\frac1{\log_2(4/3)}.
\end{equation}

\begin{theorem}\label{thm:m-bounds}
For every positive integer $k$,
$$
 m(k)\geq
 \max\left\{
 k+\lfloor\log_2k\rfloor,
 \left\lceil k+\alpha\log_2k-27\right\rceil
 \right\}.
$$
For all sufficiently large $k$,
$$
 m(k)\leq
 k+\left\lceil a\log_2k-\beta\log_2\log_2k\right\rceil.
$$
\end{theorem}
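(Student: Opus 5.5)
The plan is to invert the two universal upper bounds for the lower estimate on $m(k)$, and the random-tournament result for the upper estimate. By Proposition~\ref{prop:tournament-reduction} we may restrict to tournaments. Let $T$ be a tournament of order $n=m(k)$ with $\adn(T)\ge k$.

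\emph{Lower bound.} Lemma~\ref{lem:transitive-log} gives $k\le n-\lfloor\log_2 n\rfloor$. For $k\ge 1$ we have $n\ge k$, so $\lfloor\log_2 n\rfloor\ge\lfloor\log_2 k\rfloor$, and hence $n\ge k+\lfloor\log_2 k\rfloor$. Similarly, \eqref{eq:BJPY-universal} gives $k\le n-\alpha\log_2 n+27\le n-\alpha\log_2 k+27$. Since $n$ is an integer, $n\ge\lceil k+\alpha\log_2 k-27\rceil$. For small $k$, where the BJPY bound may be stated only for large $n$ or is vacuous, note that the right-hand side is at most $k\le n$, so nothing beyond $n\ge k$ is needed. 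Here $n\ge k$ holds because a colouring never uses more colours than vertices.

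\emph{Upper bound.} Given large $k$, the task is to choose $n$ with $n-(a\log_2 n-\beta\log_2\log_2 n)$ roughly at least $k$ and apply Theorem~\ref{thm:random-explicit}. Theorem~\ref{thm:random-explicit} asserts a.a.s.\ $\adn(\Tn)\ge n-s_n+1$ with $s_n=\lceil a\log_2 n-\beta\log_2\log_2 n\rceil$. Since the event has positive probability for large $n$, some tournament of order $n$ attains $\adn\ge n-s_n+1$.

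The key step, and the main obstacle, is the inversion. Set $n=k+\lceil a\log_2 k-\beta\log_2\log_2 k\rceil$. We need $n-s_n+1\ge k$, i.e.
\[
s_n\le\lceil a\log_2 k-\beta\log_2\log_2 k\rceil+1.
\]
Since $n=k+O(\log k)$, we have
\[
\log_2 n=\log_2 k+O\!\left(\frac{\log k}{k}\right),
\]
and $\log_2\log_2 n$ differs from $\log_2\log_2 k$ by an even smaller amount. Thus the quantity inside the ceiling for $s_n$ exceeds $g(k)=a\log_2 k-\beta\log_2\log_2 k$ by $\varepsilon_k=O(\log k/k)\to0$. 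Consequently $\lceil g(k)+\varepsilon_k\rceil\le\lceil g(k)\rceil+1$, which is exactly the needed inequality; the slack of $+1$ absorbs the tiny perturbation across the ceiling.

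The remaining requirement is that $k$ be large enough for $n$ to lie in the range where Theorem~\ref{thm:random-explicit}'s a.a.s.\ statement has positive probability, which follows because $n\to\infty$ as $k\to\infty$. Finally, $m(k)\le n$ because the tournament found has order $n$ and $\adn\ge k$.
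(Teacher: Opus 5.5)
Your proposal is correct and follows essentially the same route as the paper. For the lower bounds you invert Lemma~\ref{lem:transitive-log} and \eqref{eq:BJPY-universal} using $n\ge k$. For the upper bound you take $n=k+\lceil a\log_2k-\beta\log_2\log_2k\rceil$, note that the perturbation from $k$ to $n$ inside the ceiling is $o(1)$ and is absorbed by the $+1$ slack, and apply Theorem~\ref{thm:random-explicit}. Your small-$k$ hedge is unnecessary, since the paper states \eqref{eq:BJPY-universal} for every tournament of every order.
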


\begin{proof}
Let $D$ be an oriented graph of order $n$ with $\adn(D)\geq k$, and complete it to a tournament $T$.
Then $\adn(T)\geq k$.
Lemma~\ref{lem:transitive-log} gives $k\leq n-\lfloor\log_2n\rfloor \leq n-\lfloor\log_2k\rfloor,$ and \eqref{eq:BJPY-universal} gives $k\leq n-\alpha\log_2k+27$.
These are the two lower bounds.

For the upper bound, put $F(x)=a\log_2x-\beta\log_2\log_2x$ and $n=k+\lceil F(k)\rceil$.
Since $n=k+O(\log k)$, one has $F(n)-F(k)=o(1)$ and hence $\lceil F(n)\rceil\leq\lceil F(k)\rceil+1$ for all sufficiently large $k$.
Theorem~\ref{thm:random-explicit} yields a tournament of order $n$ with $\adn(T)\geq n-\lceil F(n)\rceil+1\geq k,$ which witnesses the stated upper bound.
\end{proof}

Following~\cite{BJPY}, let $\gamma$ be the infimum of the real numbers $c$ for which infinitely many tournaments $T$ satisfy $\adn(T)\geq |V(T)|-c\log_2|V(T)|$.

\begin{corollary}\label{cor:gamma}
The extremal constant satisfies
$$
 \frac1{\log_2(4/3)}\leq\gamma\leq\frac4{\log_2(8/7)}.
$$
\end{corollary}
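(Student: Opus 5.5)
The two inequalities are proved separately: the lower bound from the universal upper estimate \eqref{eq:BJPY-universal}, and the upper bound from the random-tournament result, Theorem~\ref{thm:random-explicit}. Both translate statements about $\adn(T)$ versus $|V(T)|$ into statements about the constant $\gamma$.

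For the lower bound, I will fix $c<\alpha=1/\log_2(4/3)$ and show that only finitely many tournaments satisfy $\adn(T)\geq n-c\log_2 n$, where $n=|V(T)|$. Combining this with \eqref{eq:BJPY-universal} gives
$$
 n-c\log_2 n\leq\adn(T)\leq n-\alpha\log_2 n+27,
$$
so $(\alpha-c)\log_2 n\leq 27$. This bounds $n$, and only finitely many tournaments have bounded order. Hence no $c<\alpha$ lies in the set whose infimum defines $\gamma$, and therefore $\gamma\geq\alpha$.

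For the upper bound, I will fix $c>a=4/\log_2(8/7)$ and exhibit infinitely many tournaments with $\adn(T)\geq |V(T)|-c\log_2|V(T)|$. Theorem~\ref{thm:random-explicit} gives, for each sufficiently large $n$, a tournament $T_n$ of order $n$ with
$$
 \adn(T_n)\geq n-s_n+1,\qquad s_n=\lceil a\log_2 n-\beta\log_2\log_2 n\rceil.
$$
The theorem holds asymptotically almost surely, so the event has positive probability for large $n$, and such a $T_n$ exists. Since $s_n-1\leq a\log_2 n\leq c\log_2 n$ for large $n$, each $T_n$ satisfies the required inequality. These tournaments have distinct orders, so there are infinitely many of them.

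This shows that every $c>a$ belongs to the set defining $\gamma$, so its infimum is at most $a$. The only point needing care is that the set is upward closed and $\gamma$ is an infimum, so membership of all $c>a$ suffices even if $a$ itself is not attained. I expect no real obstacle; the step needing most attention is stating cleanly that the a.a.s.\ conclusion yields an explicit tournament for every large $n$.
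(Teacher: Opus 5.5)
Your proof is correct and follows the paper's own one-line argument: the lower bound comes from the universal estimate \eqref{eq:BJPY-universal}, and the upper bound from Theorem~\ref{thm:random-explicit}. You additionally spell out the routine details (bounded order leaves only finitely many tournaments up to isomorphism, and ``a.a.s.'' yields an actual tournament of each large order). Your remark about upward closure is harmless but not needed: membership of every $c>a$ already forces the infimum to be at most $a$, and in fact $c=a$ itself belongs to the set, since $s_n-1<a\log_2 n$.
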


\begin{proof}
The lower bound follows from \eqref{eq:BJPY-universal}; the upper bound follows from Theorem~\ref{thm:random-explicit}.
\end{proof}

\subsection{The first critical orders}
The values $m(1)=1$ and $m(2)=3$ are immediate.
We now determine the next two values and classify the tournament witnesses at their minimum orders.

Let $\Tstar$ be the tournament with vertex set $\{h,\ell,m_0,m_1,m_2\}$.
The set $M=\{m_0,m_1,m_2\}$ induces the directed triangle $m_0\to m_1\to m_2\to m_0$, and $\{h\}\Rightarrow M\Rightarrow\{\ell\}$, $\ell\to h.$

Let $P_7$ be the Paley tournament on $\mathbb Z_7$, with $x\to y$ whenever $y-x\in\{1,2,4\}$ modulo seven.
To define $Q_7$, take a transitive set $X=\{x_1,x_2,x_3,x_4\}$ in the displayed order.
For $y\notin X$, set $S_X(y)=\{i\in[4]:y\to x_i\}.$
Add three vertices $u,v,w$ with $S_X(u)=\{2\}, S_X(w)=\{1,3\}, S_X(v)=\{1,4\},$ and orient their mutual arcs as $w\to u\to v$ and $w\to v$.

\begin{figure}[t]
\centering
\includegraphics{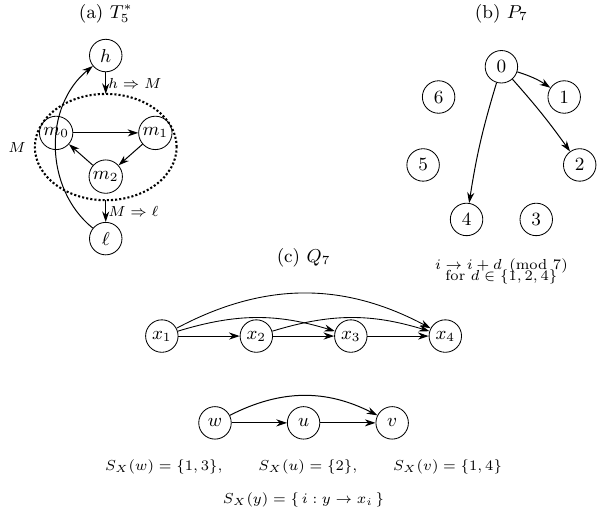}
\caption{Structural models of the critical tournaments.
Cluster arrows in~\textnormal{(a)} represent all arcs in the indicated direction.
In~\textnormal{(b)}, translation modulo seven supplies all translates of the three displayed generators.
In~\textnormal{(c)}, the pattern convention specifies every omitted cross-arc.}
\label{fig:critical-models}
\end{figure}

The three minimum-order tournament witnesses are represented in Figure~\ref{fig:critical-models}.
The pattern panel for $Q_7$ is also the form used in the comparison argument below.

The following observation is the central device in the transitive-four-set case.

\begin{lemma}[Bipartite comparison]\label{lem:comparison}
Let $T$ be a tournament and let $A,B$ be disjoint vertex sets.
Then $T[A,B]$ is acyclic if and only if the sets $N^+(a)\cap B$, $a\in A,$ are linearly ordered by inclusion.
In particular, in the notation above, $T[X,\{y,z\}]$ is acyclic if and only if $S_X(y)$ and $S_X(z)$ are comparable by inclusion.
\end{lemma}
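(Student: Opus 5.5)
By Lemma~\ref{lem:local}(b), $T[A,B]$ is acyclic if and only if it contains no directed $4$-cycle. Such a cycle alternates between the two sides, so it has the form $a\to b\to a'\to b'\to a$ with $a,a'\in A$ and $b,b'\in B$. The plan is to show that the existence of such a cycle is the same as the existence of two incomparable out-neighbourhoods $N^+(a)\cap B$ and $N^+(a')\cap B$. Both directions use the fact that $T$ is a tournament, so every pair $\{a,b\}$ with $a\in A$, $b\in B$ is joined by exactly one arc.

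\emph{Cycle gives incomparability.} Suppose $a\to b\to a'\to b'\to a$ is such a cycle. Then $b\in N^+(a)\cap B$, and $b\notin N^+(a')$ because $b\to a'$. Likewise $b'\in N^+(a')\cap B$ and $b'\notin N^+(a)$ because $b'\to a$. So neither set contains the other.

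\emph{Incomparability gives a cycle.} Suppose $N^+(a)\cap B$ and $N^+(a')\cap B$ are incomparable. Choose $b\in N^+(a)\setminus N^+(a')$ and $b'\in N^+(a')\setminus N^+(a)$ inside $B$. Since $T$ is a tournament, $b\notin N^+(a')$ forces $b\to a'$, and $b'\notin N^+(a)$ forces $b'\to a$. Incomparability also gives $a\neq a'$ and $b\neq b'$. Hence $a\to b\to a'\to b'\to a$ is a directed $4$-cycle. Together the two directions show that $T[A,B]$ is acyclic exactly when the family $\{N^+(a)\cap B : a\in A\}$ is a chain under inclusion.

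\emph{The special case.} For the second statement I apply the lemma with the roles swapped: $A=\{y,z\}$ and $B=X$. This is allowed because the condition is symmetric; alternatively one can simply reread the cycle argument with $A$ and $B$ interchanged. For a vertex $y\notin X$ we have
\[
N^+(y)\cap X=\{x_i: i\in S_X(y)\},
\]
so the correspondence $i\leftrightarrow x_i$ preserves inclusion. Two sets always form a chain precisely when they are comparable, so $T[X,\{y,z\}]$ is acyclic if and only if $S_X(y)$ and $S_X(z)$ are comparable by inclusion.

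There is no real obstacle in this argument. The only points that need care are that we use completeness of the tournament to turn a non-arc into the opposite arc, and that we check the four vertices of the constructed cycle are distinct.
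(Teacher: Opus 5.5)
Your proof is correct and follows the paper's route. The paper uses Lemma~\ref{lem:local}(b) to reduce acyclicity of $T[A,B]$ to the absence of an alternating directed $4$-cycle, and notes that such a cycle $a\to b\to a'\to b'\to a$ is exactly a witness that $N^+(a)\cap B$ and $N^+(a')\cap B$ are incomparable; you just spell out both directions and the specialisation to $A=\{y,z\}$, $B=X$, which the paper leaves implicit.
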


\begin{proof}
Two incomparable neighbourhoods give an alternating directed $4$-cycle, and every alternating directed $4$-cycle gives two incomparable neighbourhoods.
The result follows from Lemma~\ref{lem:local}.
\end{proof}

\begin{theorem}\label{thm:critical-small}
For the oriented-graph parameter in \eqref{eq:mk-definition},
$$
 m(3)=5,\qquad m(4)=7.
$$
Among tournaments of order five, $\Tstar$ is, up to isomorphism, the unique tournament with acyclic dichromatic number at least three.
Among tournaments of order seven, the only tournaments with acyclic dichromatic number at least four are $P_7$ and $Q_7$.
Moreover, $\adn(\Tstar)=3$, $\adn(P_7)=\adn(Q_7)=4.$
\end{theorem}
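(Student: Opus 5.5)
Throughout, Proposition~\ref{prop:tournament-reduction} lets us work only with tournaments, and Lemma~\ref{lem:local} reduces acyclicity to two local conditions: no directed triangle inside a class, and no alternating directed $4$-cycle between two classes. The plan has three parts. First, verify the lower bounds $\adn(\Tstar)\geq3$ and $\adn(P_7),\adn(Q_7)\geq4$. Second, show that every other tournament of order at most $5$, respectively at most $7$, admits fewer colours. Third, obtain the matching upper bounds $\adn(\Tstar)\le3$ and $\adn(P_7),\adn(Q_7)\le4$ by exhibiting explicit colourings.

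\textbf{Order five.} A $2$-colouring of a $5$-tournament uses classes of sizes $(4,1)$ or $(3,2)$. A $(4,1)$ split exists iff $T$ contains $\TT_4$. A $(3,2)$ split $\{A,B\}$ with $|B|=2$ works iff $A$ is transitive, $B$ is trivially transitive, and the two vertices of $B$ have out-neighbourhoods in $A$ that are comparable by inclusion (Lemma~\ref{lem:comparison}).

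I will run through the score sequences of $5$-tournaments:
\begin{itemize}
\item If some vertex has score $0$ or $4$, adjoin it to a transitive triple in the remaining four vertices (Lemma~\ref{lem:transitive-log}) to get a $\TT_4$.
\item The regular tournament (scores $2,2,2,2,2$) and the remaining score classes each need a direct $(3,2)$ split; these are checked by hand.
\end{itemize}
The survivor should be $\Tstar$. To see that $\Tstar$ has no $\TT_4$ and no good $(3,2)$ split, note that any transitive triple must break $M$. The cases are then few:
\begin{itemize}
\item $A=\{h,m_i,m_j\}$ fails because of $\ell\to h$ combined with $m$-arcs.
\item $A=\{h,\ell,m_i\}$ is a directed triangle.
\item The remaining choices of $A$ are similar.
\end{itemize}
Since $m(2)=3$ and every $4$-tournament contains $\TT_3$ plus a vertex, or is $2$-colourable directly, we get $m(3)=5$.

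\textbf{Order seven, and order six.} First show that every tournament of order at most $6$ is $3$-colourable. Take a transitive triple, which exists by Lemma~\ref{lem:transitive-log}, and split the remaining three vertices. Alternatively, use the order-five result: $\Tstar$ plus one vertex must be handled by a case check.

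For order seven, split on $t$, the order of a largest transitive subtournament. Since a $7$-tournament has $t\geq3$:
\begin{itemize}
\item If $t\ge5$, colour a $\TT_5$ with one colour and the other two vertices with singletons, giving $3$ colours.
\item If $t=3$, the tournament is forced to be $P_7$. This uses the known fact that $P_7$ is the unique $7$-tournament without $\TT_4$. It can be derived from a short out-neighbourhood argument: every out- and in-neighbourhood must be a directed triangle, which forces regularity and then the quadratic-residue structure.
\item If $t=4$, fix a $\TT_4$, call it $X$, and write $Y=\{y_1,y_2,y_3\}$ for the remaining vertices. Encode each $y$ by $S_X(y)\subseteq[4]$. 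Colour $X$ with one colour. A $3$-colouring arises if $Y$ splits into a transitive pair or triple class compatible with $X$; by Lemma~\ref{lem:comparison}, for a pair $\{y,z\}$ this means $S_X(y)$ and $S_X(z)$ are comparable. A $3$-colouring can also arise from other partitions, such as $X$ plus one $y$ forming a $\TT_5$ (excluded since $t=4$), or mixed classes.
\end{itemize}

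The key structural consequence in the $t=4$ case: if $\adn\ge4$, then in particular no two of $S_X(y_i)$ are comparable. So $\{S_X(y_i)\}$ is an inclusion antichain. Maximality of $t$ gives further constraints, for example that $X\cup\{y\}$ is not transitive. The plan is then:
\begin{enumerate}
\item Enumerate the antichains of three subsets of $[4]$, up to the reversal symmetry $i\mapsto 5-i$ combined with complementation.
\item For each antichain and each orientation of $T[Y]$, either exhibit a $3$-colouring using other partitions (for instance, moving some $x_i$ into a class with $y$'s) or identify $Q_7$.
\end{enumerate}
This exhaustive inclusion-antichain analysis is the main obstacle. I would first prune it using the requirement that every alternative choice of $\TT_4$ also yields an antichain, which should cut the cases to a handful.

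\textbf{Upper bounds.} Finally, $\adn(P_7),\adn(Q_7)\le4$ and $\adn(\Tstar)\le3$ are shown by explicit colourings: a transitive triple plus singletons in $P_7$, $X$ plus singletons in $Q_7$, and a transitive pair plus singletons in $\Tstar$. By Proposition~\ref{prop:tournament-reduction}, this gives $m(4)=7$.
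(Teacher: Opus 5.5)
Your outline has the same architecture as the paper. At order five it uses score sequences. At order seven it splits on the largest transitive subtournament: regularity forces $P_7$ when $t=3$, and Lemma~\ref{lem:comparison} forces the three outside patterns into an inclusion antichain when $t=4$. Your treatment of $m(3)=5$ is essentially complete. Beyond that, the steps that carry the content are announced rather than proved.

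\textbf{The lower bounds for $P_7$ and $Q_7$.} Most seriously, you never show $\adn(P_7)\geq4$ or $\adn(Q_7)\geq4$, although this is the first part of your plan. The classification can at best show that a seven-vertex obstruction must be $P_7$ or $Q_7$. It does not show that either one actually is an obstruction, so as written you have neither $m(4)\leq7$ nor the stated values. This is where the paper does real work:
\begin{itemize}
\item For $P_7$, the maps $x\mapsto ax+b$ with $a\in\{1,2,4\}$ act transitively on transitive triples. Since $t(P_7)=3$, an acyclic $3$-dicolouring would have type $(3,2,2)$ or $(3,3,1)$. A triple class can therefore be normalised to $\{0,1,2\}$, and each completion on $\{3,4,5,6\}$ is killed by an explicit alternating $4$-cycle.
\item For $Q_7$, the paper uses the representation $B\Rightarrow\{m\}\Rightarrow A$ with the cross-arc rule to exclude types $(4,2,1)$, $(3,2,2)$ and $(3,3,1)$ one by one.
\end{itemize}

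\textbf{The six-vertex bound.} This bound, needed for $m(4)\geq7$, is also unproved. ``Take a transitive triple and split the remaining three vertices'' gives no reason why some complementary pair has out-neighbourhoods in the triple comparable by inclusion. The fallback via $\Tstar$ plus a vertex is a check you never perform. The paper's argument is short. Without a $\TT_4$ the score sequence is $(2,2,2,3,3,3)$. Both score classes are then directed triangles joined by a perfect matching $a_i\to b_i$, and the three pairs $\{a_i,b_i\}$ form an acyclic $3$-dicolouring.

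\textbf{The uniqueness claims.} Both rest on finite analyses that you defer:
\begin{itemize}
\item the order-five score analysis, which excludes $(1,1,2,3,3)$, colours the regular tournament, and splits $(1,2,2,2,3)$ by the direction of the arc between $h$ and $\ell$;
\item the $t=4$ enumeration: $26$ antichains in $15$ orbits, with a single exceptional orbit $\{2\},\{1,3\},\{1,4\}$ whose only obstructing orientation, $w\to u\to v$ with $w\to v$, is $Q_7$.
\end{itemize}

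\textbf{The explicit upper-bound colourings.} These are miscounted. A pair plus singletons in $\Tstar$ uses four classes, and a transitive triple plus singletons in $P_7$ uses five. Take a transitive triple plus two singletons for $\Tstar$, and for example $\{0,1,2\}\mid\{3,5\}\mid\{4\}\mid\{6\}$ for $P_7$. Alternatively, use the criticality part of Proposition~\ref{prop:tournament-reduction}. Once every tournament of order $n-1$ is acyclically $(k-1)$-dicolourable, giving one vertex a private colour makes every tournament of order $n$ acyclically $k$-dicolourable. The upper bounds then follow from the four- and six-vertex results.

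Your direct check of $\adn(\Tstar)\geq3$ is correct and is a reasonable alternative to the paper's colour-forcing argument. You list the transitive triples $\{h,m_i,m_j\}$ and $\{\ell,m_i,m_j\}$ and test comparability against the complementary pair. The second family follows from the first, since $\Tstar$ is isomorphic to its converse with $h$ and $\ell$ interchanged.
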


\begin{proof}
By Proposition~\ref{prop:tournament-reduction}, it is enough to work with tournaments.

Every tournament on four vertices contains a transitive triple, and hence admits an acyclic $2$-dicolouring.
Thus $m(3)\geq5$.
Now let $T$ be a tournament on five vertices with $\adn(T)>2$.
Then $T$ contains no transitive subtournament of order four.
The possible score sequences are $(2,2,2,2,2)$, $(1,2,2,2,3)$, and $(1,1,2,3,3)$.
The first case admits an acyclic $2$-dicolouring, the third is impossible, while the second yields, up to isomorphism, the unique obstruction $\Tstar$; the score-sequence verification is given in Appendix~A.

The fact that $T_5^*$ admits no acyclic $2$-dicolouring already follows from the lightness argument of Bang-Jensen,Picasarri-Arrieta, and Yeo\cite[Section~4.2, proof of Theorem~21]{BJPY}.
Indeed, in their argument, the arc $uv$ corresponds to $\ell \to h$, and the directed triangle in $N^+(v) \cap N^-(u)$ corresponds to $M$.
We include the short proof for completeness.

Suppose, for a contradiction, that $T_5^*$ admits an acyclic $2$-dicolouring.
Write $M=\{a,b,c\}$, where $a \to b \to c \to a$, and recall that $\{h\} \Rightarrow M \Rightarrow \{\ell\}$ and $\ell \to h$.
Since $M$ is a directed triangle, it must use both colours.
By cyclic symmetry and an interchange of the colours, we may assume that $a$ and $b$ are red and $c$ is blue.
The vertices $h$ and $\ell$ must have different colours: otherwise, choosing a vertex $m \in M$ of their common colour would give a monochromatic directed triangle $\ell \to h \to m \to \ell$.
If $h$ is blue, then $\ell \to h \to b \to c \to \ell$ is an alternating directed $4$-cycle.
If $h$ is red, then $c \to a \to \ell \to h \to c$ is an alternating directed $4$-cycle.
Both cases contradict the definition of an acyclic dicolouring.
Thus $\vec{\chi}_a(T_5^*) \geq 3$.
Conversely, a transitive triple together with two singleton classes gives an acyclic $3$-dicolouring of $T_5^*$.
Therefore $\vec{\chi}_a(T_5^*)=3$, and, together with the four-vertex bound above, this yields $m(3)=5$.

We next prove $m(4)=7$.
Every tournament on at most six vertices admits an acyclic $3$-dicolouring; the verification is given in Appendix~A.
Hence $m(4)\geq7$.

Let $T$ be a tournament of order seven with $\adn(T)>3$, and let $t(T)$ denote the maximum order of a transitive subtournament of $T$.
Since every tournament contains a transitive triple, while a transitive set of order at least five together with the remaining vertices can be split into three acyclic colour classes, we have $t(T)\in\{3,4\}$.

Suppose first that $t(T)=3$.
Fix $v\in V(T)$ and put $A=\Nplus(v)$ and $B=\Nminus(v)$.
Neither $A$ nor $B$ contains a transitive triple; hence both have order three and induce directed triangles.
Thus $T$ is regular of outdegree three.
Write $a_0\to a_1\to a_2\to a_0$ for the cycle on $A$.
Regularity shows that the arcs from $B$ to $A$ form a perfect matching; after relabelling $B=\{b_0,b_1,b_2\}$, we may assume that $b_i\to a_i$ for each $i$, while $a_i\to b_j$ whenever $i\neq j$.

The directed triangle on $B$ must have the opposite cyclic orientation to that on $A$.
Indeed, if $b_0\to b_1\to b_2\to b_0$, then $\{a_0,b_1,a_1,b_2\}$ is transitive in the order $a_0,b_1,a_1,b_2$.
These relations determine all arcs of $T$.
Under the relabeling
$$
v\mapsto0,\qquad
(a_0,a_1,a_2)\mapsto(1,2,4),\qquad
(b_0,b_1,b_2)\mapsto(6,5,3),
$$
we have $x\to y$ precisely when $y-x\in\{1,2,4\}\pmod 7$.
Hence $T\cong P_7$.

Suppose now that $t(T)=4$.
Fix a transitive four-set $X=\{x_1,x_2,x_3,x_4\}$ in this order, and put $Y=V(T)\setminus X$.
For $y\in Y$, recall that $S_X(y)=\{i\in[4]:y\to x_i\}$.
None of the three patterns $S_X(y)$ can be $\varnothing$, $\{4\}$, $\{3,4\}$, $\{2,3,4\}$, or $[4]$, since in each of these cases $y$ can be inserted into the transitive order of $X$ to produce a transitive five-set.

Moreover, the three outside patterns must form an antichain under inclusion.
Indeed, if $S_X(y)$ and $S_X(z)$ were comparable for two vertices $y,z\in Y$, then Lemma~\ref{lem:comparison} would imply that $T[X,\{y,z\}]$ is acyclic.
The classes $X$, $\{y,z\}$, and $Y\setminus\{y,z\}$ would then form an acyclic $3$-dicolouring, a contradiction.

The remaining antichains are analysed in Appendix~A.
Up to taking the converse tournament and then reversing the transitive order of $X$, and up to relabelling $Y$, all nonexceptional patterns admit an acyclic $3$-dicolouring.
The unique exceptional pattern is $\{2\},\{1,3\},\{1,4\}$, and the only orientation of the three outside vertices that remains an obstruction gives precisely $Q_7$.
Hence every seven-vertex tournament with acyclic dichromatic number at least four is isomorphic to $P_7$ or $Q_7$.

Finally, the directed-triangle and alternating-$4$-cycle verifications in Appendix~A show that neither $P_7$ nor $Q_7$ admits an acyclic $3$-dicolouring, while the same appendix gives explicit acyclic $4$-dicolourings of both tournaments.
Thus $\adn(P_7)=\adn(Q_7)=4$, and consequently $m(4)=7$.
\end{proof}
\section*{Acknowledgments}
This work was supported by the Fundamental and Interdisciplinary Disciplines Breakthrough Plan of the Ministry of Education of China (JYB2025XDXM207).

\section*{Declaration of AI-assisted technologies}

During the preparation of this manuscript, the authors used ChatGPT to assist with creating figures, organizing the manuscript, and polishing the language.
ChatGPT also assisted in generating Python code used to verify selected graph examples.

\appendix
\section{Finite verifications for the critical-order theorem}
\label{app:critical-proof}

This appendix contains the finite verifications omitted from the proof of Theorem~5.6.
We give the five-vertex score-sequence analysis, the six-vertex bound, the remaining pattern analysis in the case of a transitive four-set, and the verification that $P_7$ and $Q_7$ have acyclic dichromatic number four.

\subsection{The five-vertex score-sequence analysis}

Let $T$ be a tournament on five vertices with no transitive subtournament of order four.
No vertex has outdegree $0$ or $4$: among the other four vertices there is a transitive triple, and adjoining a source or sink would produce a transitive four-set.
Since the sum of the outdegrees is $10$, the possible score sequences are $ (2,2,2,2,2)$, $(1,2,2,2,3)$, $(1,1,2,3,3)$.

The last sequence is impossible.
Suppose that $x\to y$ and both $x$ and $y$ have outdegree $3$.
Since $y$ is dominated by $x$, it must dominate each of the other three vertices.
The out-neighbourhood of $x$ consists of $y$ together with two of these three vertices, and these four vertices therefore induce a transitive tournament, a contradiction.

The regular tournament of order five is unique up to isomorphism.
In its cyclic model, where $i$ dominates $i+1$ and $i+2$ modulo $5$, the classes $\{0,1\}$ and $\{2,3,4\}$ form an acyclic $2$-dicolouring.
Indeed, the first class and the second class are transitive, and the bipartite subdigraph between them has topological order $4,1,3,0,2$.

It remains to consider the score sequence $(1,2,2,2,3)$.
Let $h$ and $\ell$ have outdegrees $3$ and $1$, respectively, and put $M=V(T)\setminus\{h,\ell\}$.

Suppose first that $\ell\to h$.
Degree considerations give $h\Rightarrow M\Rightarrow\{\ell\}$.
The subtournament $T[M]$ cannot be transitive, since adjoining $h$ would then give a transitive four-set.
Hence $T[M]$ is a directed triangle, and the resulting tournament is precisely $\Tstar$.

Suppose instead that $h\to\ell$.
The unique out-neighbour of $\ell$ in $M$ must be one of the two out-neighbours of $h$ in $M$; otherwise those two vertices together with $h$ and $\ell$ form a transitive four-set.
Label $M=\{a,b,c\}$ so that $h\to a,b$, $c\to h$, $\ell\to a$, and $b,c\to\ell$.
The absence of a transitive four-set forces $a\to b$ and $b\to c$, and the degree sequence then forces $a\to c$.
Thus $T[M]$ is transitive.
The classes $\{h,\ell\}$ and $M$ form an acyclic $2$-dicolouring: the bipartite subdigraph between them has topological order $c,h,b,\ell,a$.

This proves the score-sequence assertion used in the proof of Theorem~5.6.

\subsection{The six-vertex bound}

Let $T$ be a tournament on six vertices.
If $T$ contains a transitive four-set, that set together with two singleton classes gives an acyclic $3$-dicolouring.
We may therefore assume that $T$ contains no transitive four-set.

For every vertex $v$, neither $\Nplus(v)$ nor $\Nminus(v)$ contains a transitive triple, since adjoining $v$ would then produce a transitive four-set.
Hence both neighbourhoods have order at most three.
Since their orders sum to five, every vertex has outdegree $2$ or $3$, and the score sequence is $(2,2,2,3,3,3)$.

Let $A=\{a_0,a_1,a_2\}$ be the set of vertices of outdegree $2$ and let $B=\{b_0,b_1,b_2\}$ be the set of vertices of outdegree $3$.
We claim that neither $T[A]$ nor $T[B]$ is transitive.

Suppose, for a contradiction, that $T[A]$ is transitive, say $a_0\to a_1\to a_2$.
Since every vertex of $A$ has total outdegree $2$, the numbers of its out-neighbours in $B$ are respectively $0,1,2$.
Thus every vertex of $B$ dominates $a_0$, and exactly two vertices of $B$ dominate $a_1$.
No vertex of $B$ can dominate all of $A$, since that would produce a transitive four-set.
Hence the two vertices of $B$ dominating $a_1$, together with $a_0$ and $a_1$, form a transitive four-set, a contradiction.
The assertion for $B$ follows by taking the converse tournament.

Thus both $T[A]$ and $T[B]$ are directed triangles.
Every vertex of $A$ has exactly one out-neighbour in $B$, while every vertex of $B$ receives exactly one arc from $A$.
The arcs from $A$ to $B$ therefore form a perfect matching.
After relabelling $B$, we may assume that $a_i\to b_i$ for $i=0,1,2$, while $b_j\to a_i$ whenever $i\neq j$.

Now colour each pair $\{a_i,b_i\}$ with colour $i+1$.
Each colour class is acyclic.
For two distinct indices $i,j$, the bipartite digraph between $\{a_i,b_i\}$ and $\{a_j,b_j\}$ contains no directed $4$-cycle: the two cross-arcs $b_j\to a_i$ and $b_i\to a_j$ prevent an alternating cyclic orientation.
By Lemma~2.2, this is an acyclic $3$-dicolouring.

Consequently every tournament on at most six vertices is acyclic $3$-dicolourable.

\subsection{The remaining pattern analysis}

We continue with the notation used in the proof of Theorem~5.6.
Thus $X=\{x_1,x_2,x_3,x_4\}$ is transitive in this order, $Y=\{u,v,w\}$, and $S_X(y)=\{i\in[4]:y\to x_i\}.$
The main proof shows that the three outside patterns form an antichain and avoid the five insertion patterns $\varnothing$, $\{4\}$, $\{3,4\}$, $\{2,3,4\}$, and $[4]$.

For brevity, write $ij$ for $\{i,j\}$ and similarly for larger subsets.
We also write, for example, $13v\mid24w\mid u$ for the three colour classes $\{x_1,x_3,v\}$, $\{x_2,x_4,w\}$, and $\{u\}$.
In each row below, the vertices $u,v,w$ carry the three listed patterns in that order.

Taking the converse tournament and reversing the order of $X$ induces the involution $S\longmapsto\{j\in[4]:5-j\notin S\}.$
There are eleven admissible patterns, $1,2,3,12,13,14,23,24,123,124,134.$
Classifying three-element antichains by the sizes of their members gives respectively $1,2,5,10,5,2,1$ antichains of types $(1,1,1),(1,1,2),(1,2,2),(2,2,2), (2,2,3),(2,3,3),(3,3,3).$
Thus there are $26$ antichains.
After relabelling $Y$ and applying the above involution, they form $15$ orbits.
The table treats $14$ of them; the remaining orbit is represented by $\{2\},\{1,3\},\{1,4\}$.
Up to this involution and a permutation of $Y$, all nonexceptional antichains are covered by the following table.

{\small
$$
\begin{array}{c|l}
(S_X(u),S_X(v),S_X(w))
 & \text{acyclic $3$-dicolouring}\\ \hline

1,2,3
 &12\mid34u\mid vw
   \quad\text{or}\quad
   12\mid34v\mid uw\\

1,3,24
 &13v\mid24w\mid u\\

2,3,14
 &14w\mid23v\mid u\\ \hline

12,13,14
 &123\mid4u\mid vw
   \quad\text{or}\quad
   123\mid4v\mid uw\\

12,13,24
 &13v\mid24w\mid u\\

12,14,23
 &14v\mid23w\mid u\\

12,14,24
 &123\mid4v\mid uw
   \quad\text{or}\quad
   123\mid4w\mid uv\\

13,14,23
 &14v\mid23w\mid u\\

13,14,24
 &13u\mid24w\mid v\\

14,23,24
 &13w\mid2v\mid4u\\ \hline

1,23,24
 &13w\mid4u\mid2v\\

3,12,14
 &124u\mid v\mid3w\\

3,12,24
 &124u\mid v\mid3w\\

3,14,24
 &124u\mid3v\mid w
\end{array}
$$
}

In each row the displayed colour classes are transitive, and the interactions between nonsingleton classes are acyclic by Lemma~5.5.
In the three rows with two displayed colourings, choose the first colouring when respectively \(u\to v\), \(u\to v\), and \(w\to v\), and choose the second when the corresponding arc is reversed.

All single-colouring rows work for every orientation of $T[Y]$.

The only antichain not covered above is, up to the stated symmetries, $S_X(u)=\{2\},\qquad S_X(w)=\{1,3\},\qquad S_X(v)=\{1,4\}.$
The colouring $123\mid4u\mid vw$ is acyclic unless $w\to u\to v$; in the exceptional orientation the relevant alternating cycle is $x_4\to w\to u\to v\to x_4$.
If $v\to w$, however, the colouring $123\mid4vw\mid u$ is acyclic, since $\{x_4,v,w\}$ is transitive in the order $v\to x_4\to w$.
Therefore an obstruction must satisfy $w\to u\to v$ and $w\to v$.
Together with the three outside patterns, these arcs determine precisely $Q_7$.

\subsection{The tournaments \texorpdfstring{$P_7$}{P7} and \texorpdfstring{$Q_7$}{Q7}}

We first verify the value for $P_7$.
Every out-neighbourhood in $P_7$ is a directed triangle, so $t(P_7)=3$.
The automorphisms $x\mapsto ax+b$, where $a\in\{1,2,4\}$ and arithmetic is modulo $7$, act transitively on the transitive triples of $P_7$.

Indeed, let $p\to q\to r$ with $p\to r$ be a transitive triple.
Translation by $-p$, followed by multiplication by the inverse of $q-p$, sends $p$ to $0$ and $q$ to $1$.
The image $r'$ satisfies $r'\in\{1,2,4\}$ and $r'-1\in\{1,2,4\}$, and therefore $r'=2$.
Thus every transitive triple may be normalised to $C=\{0,1,2\}$.

If $P_7$ admitted an acyclic $3$-dicolouring, then, since $t(P_7)=3$, its colour-class sizes would have type $(3,2,2)$ or $(3,3,1)$.
After normalising a class of size three to $C=\{0,1,2\}$, the possibilities on the remaining four vertices are excluded as follows.

{\small
$$
\begin{array}{c|c}
\text{remaining non-singleton classes}
 & \text{alternating directed cycle}\\ \hline
34\mid56
 &0\to4\to1\to3\to0\\
35\mid46
 &3\to4\to5\to6\to3\\
36\mid45
 &0\to4\to1\to5\to0\\
345\mid6\ \text{or}\ 456\mid3
 &0\to4\to1\to5\to0
\end{array}
$$
}

Here $345$ and $456$ are the only transitive triples in $\{3,4,5,6\}$.
Hence $P_7$ has no acyclic $3$-dicolouring.
On the other hand, $\{0,1,2\}\mid\{3,5\}\mid\{4\}\mid\{6\}$ is an acyclic $4$-dicolouring.
The only interaction between two nonsingleton classes has topological order $1,5,2,3,0$.
Consequently $\adn(P_7)=4$.

We now turn to $Q_7$.
It is convenient to use the following equivalent structural representation:
$$
 m=x_3,\qquad
 (a_0,a_1,a_2)=(x_4,u,v),\qquad
 (b_0,b_1,b_2)=(w,x_2,x_1).
$$
Put \(A=\{a_0,a_1,a_2\}\) and \(B=\{b_0,b_1,b_2\}\).
Then \(B\Rightarrow\{m\}\Rightarrow A\), both \(A\) and \(B\) are directed triangles, with \(a_0\to a_1\to a_2\to a_0\) and \(b_0\to b_2\to b_1\to b_0\), and the cross-arcs satisfy \(a_i\to b_i\) and \(b_j\to a_i\) for \(i\neq j\).

Indices are read modulo $3$.
Simultaneously replacing every $a_i,b_i$ by $a_{i+1},b_{i+1}$ is an automorphism, and interchanging $a_i$ with $b_i$ gives an isomorphism between $Q_7$ and its converse.

The set $X=\{x_1,x_2,x_3,x_4\}$ is transitive, so $t(Q_7)\geq4$.
There is no transitive five-set.
Such a set would have to contain $m$ and two vertices from each of $A$ and $B$, since neither directed triangle contributes all three of its vertices.
Because $B\Rightarrow\{m\}\Rightarrow A$, the two selected vertices of $B$ would have to dominate both selected vertices of $A$.
By the cross-arc rule this would require two disjoint two-element subsets of $\{0,1,2\}$, which is impossible.
Hence $t(Q_7)=4$.

Suppose that $Q_7$ has an acyclic $3$-dicolouring.
Since no colour class has more than four vertices, its class-size type must be $(4,2,1)$, $(3,3,1)$, or $(3,2,2)$.

For a transitive class $C$ and a pair $R\subseteq V(Q_7)\setminus C$, call $R$ \emph{$C$-good} if $Q_7[C,R]$ is acyclic.

We first exclude type $(4,2,1)$.
Every transitive four-set contains $m$: otherwise it has two vertices from each of $A$ and $B$, and the cross-arc rule gives a directed triangle.
If it contains $m$, then the same rule forces either $\{m,a_i,a_{i+1},b_{i+2}\} \quad\text{or}\quad \{m,a_i,b_{i+1},b_{i+2}\}.$
Hence, up to cyclic shift, the transitive four-sets are precisely $\{m,a_0,a_1,b_2\} \quad\text{and}\quad \{m,a_0,b_1,b_2\}.$
The two families are interchanged by self-duality, so it is enough to consider $C=\{b_2,b_1,m,a_0\},$ in this transitive order.
The patterns of the three complementary vertices $a_1,a_2,b_0$ with respect to $C$ are respectively $\{2\}$, $\{1,4\}$, and $\{1,3\}$.
They are pairwise incomparable.
By Lemma~5.5, no pair of complementary vertices is $C$-good.
Hence type $(4,2,1)$ is impossible.

We next exclude type $(3,2,2)$.
Up to cyclic symmetry and self-duality, every transitive triple is represented by one of the four rows below.
Indeed, classify a triple according to whether it contains $m$ and according to its numbers of vertices in $A$ and $B$.
The cross-arc rule leaves one orbit of type $mAB$, one orbit of type $mAA$, and two orbits of type $AAB$; the remaining types follow by self-duality.
The second column lists all $C$-good pairs in its complement.

{\small
$$
\begin{array}{c|l}
C & C\text{-good pairs}\\ \hline
\{m,a_0,b_1\}
 &a_1b_2,\ a_2b_2,\ b_0b_2\\
\{a_0,a_1,b_2\}
 &mb_1,\ a_2b_1\\
\{m,a_0,a_1\}
 &a_2b_1,\ a_2b_2,\ b_0b_2,\ b_1b_2\\
\{a_0,a_1,b_0\}
 &ma_2,\ mb_2,\ a_2b_1,\ a_2b_2
\end{array}
$$
}

In the first two rows every good pair contains $b_2$ and $b_1$, respectively, so two disjoint good pairs do not exist.
In the third row the only good splitting is $a_2b_1\mid b_0b_2$, but $a_2\to b_2\to b_1\to b_0\to a_2$ is an alternating directed $4$-cycle.
In the fourth row the only good splitting is $a_2b_1\mid mb_2$, and $a_2\to b_2\to b_1\to m\to a_2$ is alternating.
Hence type $(3,2,2)$ is impossible.

It remains to exclude type $(3,3,1)$.
Equivalently, we must show that deleting any vertex from $Q_7$ does not leave an acyclic $2$-dicolouring.
By cyclic symmetry and self-duality, it is enough to delete either $m$ or $a_0$.

First delete $m$.
Since $A$ is a directed triangle, two consecutive vertices of $A$ have the same colour.
After a cyclic shift, suppose that $a_i,a_{i+1}$ are red and $a_{i+2}$ is blue.
The directed triangles
$$
 a_i\to a_{i+1}\to b_{i+1}\to a_i,\qquad
 a_{i+2}\to b_{i+2}\to b_{i+1}\to a_{i+2},
$$
and $a_i\to b_i\to b_{i+2}\to a_i$ force $b_{i+1}$ and $b_i$ to be blue and $b_{i+2}$ to be red.
But then $a_i\to b_i\to a_{i+1}\to a_{i+2}\to a_i$ is an alternating directed $4$-cycle.

Now delete $a_0$.
The directed triangle $B$ remains.
Call the colour appearing twice on $B$ red and the other colour blue.
The three possible repeated pairs give the following contradictions.

{\small
$$
\begin{array}{c|l}
\text{red pair in }B & \text{forced contradiction}\\ \hline

b_0,b_1
&a_1\text{ blue},\ a_2\text{ red};\quad
 \begin{cases}
 a_1\to a_2\to b_2\to m\to a_1
   \text{ alternating},&m\text{ red},\\
 a_2\to b_2\to b_1\to m\to a_2
   \text{ alternating},&m\text{ blue};
 \end{cases}\\[2mm]

b_1,b_2
&a_2\text{ blue};\quad
 \begin{cases}
 a_1\to b_1\to a_2\to b_2\to a_1
   \text{ alternating},&a_1\text{ blue},\\
 a_1\to a_2\to b_2\to m\to a_1
   \text{ alternating},&a_1\text{ red};
 \end{cases}\\[2mm]

b_2,b_0
&\begin{cases}
 a_2\to b_2\to b_1\to m\to a_2
   \text{ alternating},
   &a_2\text{ blue},\ m\text{ red},\\
 a_1\to a_2\to b_2\to m\to a_1
   \text{ alternating},
   &a_2\text{ blue},\ m\text{ blue},\\
 a_1\to a_2\to b_2\to a_1
   \text{ monochromatic},
   &a_2\text{ red}.
 \end{cases}
\end{array}
$$
}

In the second row, if $a_1$ is red then the directed triangle $m\to a_1\to b_1\to m$ forces $m$ to be blue.
In the final row, the colours required in the displayed alternatives follow from the same directed-triangle forcing.
Thus every vertex deletion fails to admit an acyclic $2$-dicolouring, and type $(3,3,1)$ is impossible.

We have proved that $Q_7$ has no acyclic $3$-dicolouring.
Conversely, $\{x_2,x_3,v,x_4\}\mid\{u\}\mid\{x_1\}\mid\{w\}$ is an acyclic $4$-dicolouring.
Its only nonsingleton colour class is transitive in the order $x_2\to x_3\to v\to x_4$, and every bipartite subdigraph involving a singleton class is acyclic.
Hence $\adn(Q_7)=4$.

This completes the finite verifications used in Theorem~5.6.

\bibliographystyle{plain}
\bibliography{ref}

\end{document}